\providecommand{\U}[1]{\protect\rule{.1in}{.1in}}
\newtheorem{theorem}{Theorem}
\newtheorem{corollary}[theorem]{Corollary}
\newtheorem{lemma}[theorem]{Lemma}
\newtheorem{proposition}[theorem]{Proposition}
\newenvironment{proof}[1][Proof]{\noindent\textbf{#1.} }{\ \rule{0.5em}{0.5em}}
\begin{document}

\title{On augmented eccentric connectivity index of graphs and trees}
\author{Jelena Sedlar\\\textit{University of Split, Faculty of civil engeneering, architecture and
geodesy,}\\\textit{Matice hrvatske 15, 21000 Split, Croatia}}
\maketitle

\begin{abstract}
In this paper we establish all extremal graphs with respect to augmented
eccentric connectivity index among all (simple connected) graphs, among trees
and among trees with perfect matching. For graphs that turn out to be extremal
explicit formulas for the value of augmented eccentric connectivity index are derived.

\end{abstract}

\section{Introduction}

Several topological indices based on graph theoretical notion of eccentricity
have been recently proposed and/or used in QSAR and QSPR studies. Namely,
eccentric connectivity index (\cite{bib_Sharma(1997)ECI}), eccentric distance
sum (\cite{bib_Gupta(2002)}), adjacent eccentric distance sum
(\cite{bib_Sardana(2003)}) and augmented and super augmented eccentric
connectivity index (\cite{bib_Bajaj(2005)}, \cite{bib_Bajaj(2006)},
\cite{bib_Dureja(2008)} and \cite{bib_Dureja(2009)}). These indices have been
shown to be very useful (predicting pharmaceutical properties), therefore
their mathematical properties have been studied too. The most extensive study
has been conducted for eccentric connectivity index, for which extremal graphs
and trees have been established (\cite{bib_DoslicVuk(2010)},
\cite{bib_Zhou(2010)}, \cite{bib_Ilic(2011)Chem}). Furthermore, the eccentric
connectivity index of some special kinds of graphs was studied such as
unicyclic graphs and different kinds of hexagonal systems
(\cite{bib_Doslic(2011)nanoECI},\cite{bib_Doslic(2011)Chains}). For a detailed
survey on these and other results concerning eccentric connectivity index we
refer the reader to \cite{bib_IlicSurvey(2010)}. Recently, mathematical
properties of eccentric distance sum started to be investigated too. There are
some results on eccentric distance sum of trees and unicyclic graphs
(\cite{bib_Ilic(2011)EDS}) and of general graphs
(\cite{bib_Ilic(2011)EDSgraphs}). As for the augmented eccentric connectivity
index, there are some results with explicit formulas for several classes of
graphs, in particular for some open and closed unbranched polymers and
nanostructure (\cite{bib_Doslic(2012)AECI}). Otherwise, augmented eccentricity
index was not very much studied.

In this paper we present the results concerning extremal graphs and values of
augmented eccentric connectivity index on class of simple connected graphs, on
trees and on trees with perfect matching. The paper is organized as follows.
In the second section 'Preliminaries' some basic notions and also the notation
are introduced. Also, explicit formulas for the value of augmented eccentric
connectivity index for some specific graphs (such as paths, stars, etc.) which
will later be proved as extremal are derived. Third section is named 'Extremal
trees'. In it we establish all minimal and extremal trees with respect to
augmented eccentric connectivity index. Interestingly, it turns out that
maximal tree generally is not a star as is the case with other eccentricity
based indices, but a specific kind of tree with diameter $4.$ In fourth
section we establish extremal trees among trees with perfect matching.
Finally, in fifth section we use the results for trees to establish the
extremal graphs in class of general simple connected graphs.

\section{Preliminaries}

In this paper we consider only simple connected graphs. We will use the
following notation: $G$ for graph, $V(G)$ or just $V$ for its set of vertices,
$E(G)$ or just $E$ for its set of edges. With $n$ we will denote number of
vertices in graph $G.$ For two vertices $u,v\in V$ we define \textit{distance}
$d(u,v)$ of $u$ and $v$ as the length of shortest path connecting $u$ and $v.$
Given the notion of distance we can define several other notions based on
distance. First, for a vertex $u\in V$ we define \textit{eccentricity}
$\varepsilon(u)$ as the maximum of $d(u,v)$ over all $v\in V.$ Furthermore, we
define \textit{diameter} $D$ of graph $G$ as the maximum of $d(u,v)$ over all
pairs of vertices $u,v\in V.$ A path $P$ in $G$ connecting vertices $u$ and
$v$ is called \textit{diametric} if $d(u,v)=D.$ The set of all vertices with
minimum eccentricity in $G$ is called \textit{center} of $G$ and such vertices
are called \textit{central}. For a vertex $u\in V$ a \textit{degree} $\deg(u)$
is defined as number of vertices from $V $ adjacent to $u$. Now, we can define
\textit{augmented eccentric connectivity index} of a graph $G$ as
\[
\xi^{ac}(G)=\sum_{u\in V}\frac{M(u)}{\varepsilon(u)}%
\]
where $M(u)$ is product of degrees of all neighbors of $u$ and $\varepsilon
(u)$ is eccentricity of $u.$ Sometimes, for brevity sake, this index will be
called 'augmented ECI'.

\bigskip

Let us now define some special kinds of graphs. First, $K_{n}$ will denote a
\textit{complete graph} on $n$ vertices. Special class of graphs which will be
of interest are trees. A \textit{tree} is a graph with no cycles. It is easily
seen that tree has only one central vertex if $D$ is even, and two central
vertices if $D$ is odd. We say that a vertex in tree $T$ is a leaf if its
degree is $1,$ otherwise we say that a vertex is non-leaf. Also, we say that a
vertex in a tree is \textit{branching} if its degree is greater or equal than
$3$. We say that a tree $T$ is \textit{spanning tree} of graph $G$ if
$V(T)=V(G)$ and $E(T)\subseteq E(G).$ Now, $P_{n}$ will denote a \textit{path}
on $n$ vertices and $S_{n}$ will denote a \textit{star} on $n$ vertices. We
will also specially consider trees of diameter $4.$ Let us therefore introduce
some interesting classes of graphs with diameter $4.$ We say that a tree $T$
is \textit{degree balanced} if its diameter is $4$ and all neighbors of (the
only) central vertex differ in degree by at most one. With $TB_{n,k}$ we will
denote degree balanced tree on $n$ vertices with degree of central vertex
being $k.$ Note that there is only one such tree up to isomorphism. Now, from
definition follows that neighbors of central vertex in degree balanced tree
$T$ can have only two degrees, say $p-1$ and $p.$ Note that $p$ is determined
by $k$ and holds%
\[
p=\left\lceil \frac{n-1}{k}\right\rceil .
\]
But, on the other hand $k$ is not determined by $p.$ We have
\[
\frac{n-1}{p}\leq k<\frac{n-1}{p-1}.
\]

Having this in view, we define (almost) perfect degree balance in a tree.
Namely, we say that a degree balance is \textit{perfect} if all neighbors of
central vertex have the degree $p$, we say that balance is \textit{almost
perfect} if maximum possible number of neighbors of central vertex have the
degree $p$. Note that degree balanced tree is in (almost) perfect balance if
and only if
\[
k=\left\lceil \frac{n-1}{p}\right\rceil .
\]
An example of tree with degree balance, almost perfect degree balance and
perfect degree balance is shown in Figure \ref{Figure1}.%

\begin{figure}[ptb]%
\centering
\includegraphics[
height=0.8406in,
width=5.4578in
]%
{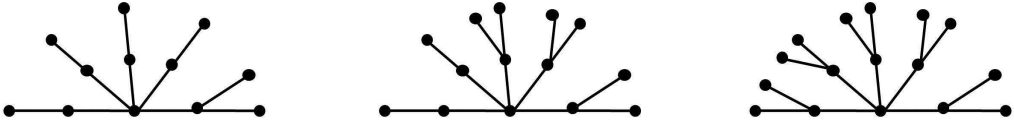}%
\caption{Trees: $TB_{12,5}$ with degree balance, $TB_{14,5}$ with almost
perfect degree balance and $TB_{16,5}$ with perfect degree balance.}%
\label{Figure1}%
\end{figure}

Now, we will establish exact values of augmented ECI for some of these graphs,
which will later be proved as extremal for some class of graphs. Let us denote
$H_{n}=\sum_{i=1}^{n}\frac{1}{i}.$ By direct calculation we obtain the
following proposition.

\begin{proposition}
\label{prop_values}For paths $P_{n}$ on $n\geq5$ vertices, stars $S_{n}$ on
$n\geq4$ vertices, degree balanced trees $TB_{n,\left\lceil \frac{n-1}%
{3}\right\rceil }$ on $n\geq8$ vertices, degree balanced graphs $TB_{n,\frac
{n}{2}}$ on $n\geq6$ vertices where $n$ is even, complete graphs $K_{n}$ on
$n\geq2$ vertices holds:

\begin{enumerate}
\item $\xi^{ac}(P_{n})=\left\{
\begin{tabular}
[c]{ll}%
$8(H_{n-1}-H_{\frac{n-2}{2}})-(\frac{4}{n-1}+\frac{4}{n-2}),$ & for $n$
even,\\
$8(H_{n-1}-H_{\frac{n-3}{2}})-(\frac{12}{n-1}+\frac{4}{n-2}),$ & for $n$ odd,
\end{tabular}
\right.  $

\item $\xi^{ac}(S_{n})=1+\frac{(n-1)^{2}}{2},$

\item $\xi^{ac}(TB_{n,\left\lceil \frac{n-1}{3}\right\rceil })=\left\{
\begin{tabular}
[c]{ll}%
$\frac{3^{k}}{2}+\frac{k^{2}}{3}+\frac{3k}{2},$ & for $n=3k+1,$\\
$3^{k-1}+\frac{k^{2}}{3}+\frac{3k}{2}-1$ & for $n=3k,$\\
$2\cdot3^{k-2}+\frac{k^{2}}{3}+\frac{3k}{2}-\frac{1}{2}$ & for $n=3k-1.$%
\end{tabular}
\right.  $

\item $\xi^{ac}(TB_{n,\frac{n}{2}})=2^{\frac{n}{2}-2}+\frac{1}{12}\left(
n^{2}+3n-6\right)  $,

\item $\xi^{ac}(K_{n})=n\cdot\left(  n-1\right)  ^{n-1}.$
\end{enumerate}
\end{proposition}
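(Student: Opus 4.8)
The plan is to prove the proposition family by family; in each case the graph is rigid enough that its vertices fall into two or three orbits, so the only work is to read off the eccentricity $\varepsilon(u)$ and the neighbour‑degree product $M(u)$ for a representative of each orbit and then add up $M(u)/\varepsilon(u)$. I would handle the two trivial families first, then paths, then the two diameter‑$4$ families.

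For $S_{n}$: the centre $c$ has $\varepsilon(c)=1$ and $M(c)=1^{\,n-1}=1$, while each of the $n-1$ leaves has $\varepsilon=2$ and neighbour‑degree product $\deg(c)=n-1$; hence $\xi^{ac}(S_{n})=1+(n-1)\cdot\frac{n-1}{2}$. For $K_{n}$: every vertex has $\varepsilon=1$ and $M=(n-1)^{n-1}$, so $\xi^{ac}(K_{n})=n(n-1)^{n-1}$.

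For $P_{n}=v_{1}v_{2}\cdots v_{n}$ I would use $\varepsilon(v_{i})=\max(i-1,\,n-i)$ together with the fact that $M(v_{i})=4$ for each internal vertex $v_{3},\dots,v_{n-2}$ while $M(v_{1})=M(v_{2})=M(v_{n-1})=M(v_{n})=2$ (this is where $n\ge5$ is needed). Writing the sum as ``$4$ times everything, corrected at the four end vertices'' gives $\xi^{ac}(P_{n})=4\sum_{i=1}^{n}\frac{1}{\varepsilon(v_{i})}-\frac{4}{n-1}-\frac{4}{n-2}$. The remaining sum is then evaluated by noting that, as $i$ runs over all vertices, $\varepsilon(v_{i})$ takes each value in $\{\lceil (n-1)/2\rceil,\dots,n-1\}$ either once (the central value, when $n$ is odd) or twice; this yields $\sum_{i}1/\varepsilon(v_{i})=2\big(H_{n-1}-H_{(n-2)/2}\big)$ for even $n$ and $2\big(H_{n-1}-H_{(n-3)/2}\big)-\frac{2}{n-1}$ for odd $n$, and substituting gives the two displayed formulas. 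The even/odd centre bookkeeping is the only mildly fiddly point here.

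For $TB_{n,n/2}$ (even $n$) and $TB_{n,\lceil(n-1)/3\rceil}$ I would first record their common diameter‑$4$ structure: the unique centre $c$ has $\varepsilon(c)=2$, every neighbour of $c$ has eccentricity $3$, every leaf has eccentricity $4$; and $M(c)=\prod_{v\sim c}\deg v$, each neighbour $v$ of $c$ has $M(v)=\deg(c)$ (its remaining neighbours being leaves), and a leaf has $M$ equal to the degree of the neighbour of $c$ it hangs from. Grouping leaves by that neighbour turns the leaf part of the sum into $\frac14\sum_{v\sim c}(\deg v-1)\deg v$, so once the degree multiset of the neighbours of $c$ is known everything is elementary arithmetic. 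For $TB_{n,n/2}$ this multiset is $n/2-1$ copies of $2$ and one copy of $1$, giving $\xi^{ac}=2^{n/2-2}+\frac1{12}(n^{2}+3n-6)$. For $TB_{n,\lceil(n-1)/3\rceil}$ the central degree is $k=\lceil(n-1)/3\rceil$, the neighbours of $c$ all have degree $2$ or $3$, and the number $t$ of degree‑$3$ neighbours is pinned down by counting leaves two ways, $n-1-k=(2t)+(k-t)$, i.e.\ $t=n-1-2k$; the residues $n=3k+1,\,3k,\,3k-1$ give $t=k,\,k-1,\,k-2$, and substituting $M(c)=3^{t}2^{\,k-t}$ into the assembled expression $\frac{M(c)}{2}+\frac{k^{2}}{3}+\big(t+\tfrac{k}{2}\big)$ produces the three listed values. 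I expect this last step — correctly determining $t$, hence $M(c)$, for each residue of $n$ modulo $3$ — to be the main obstacle; everything else is direct substitution and simplification.
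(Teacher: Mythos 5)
Your plan is exactly what the paper does (it offers no argument beyond ``by direct calculation''), and almost all of it checks out: the orbit decomposition, the eccentricities and neighbour-degree products for $S_n$, $K_n$ and $P_n$ (including the even/odd harmonic-sum bookkeeping, which I verified reduces to the two displayed expressions), the degree multiset for $TB_{n,n/2}$, the general diameter-$4$ template $\frac{M(c)}{2}+\frac{k^{2}}{3}+\frac14\sum_{v\sim c}(\deg v-1)\deg v$, and the count $t=n-1-2k$ of degree-$3$ neighbours.

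However, your closing assertion that substituting $t$ and $M(c)=3^{t}2^{k-t}$ ``produces the three listed values'' fails in the third branch of item 3. For $n=3k-1$ you have $t=k-2$, so your own assembled expression equals $2\cdot3^{k-2}+\frac{k^{2}}{3}+\left(k-2+\frac{k}{2}\right)=2\cdot3^{k-2}+\frac{k^{2}}{3}+\frac{3k}{2}-2$, whereas the proposition lists the constant $-\frac{1}{2}$; the two differ by $\frac{3}{2}$. A direct check on $TB_{8,3}$ (so $k=3$): the centre contributes $\frac{3\cdot2\cdot2}{2}=6$, the three neighbours contribute $3\cdot\frac{3}{3}=3$, the two leaves under the degree-$3$ neighbour contribute $2\cdot\frac{3}{4}$ and the two remaining leaves $2\cdot\frac{2}{4}$, giving $\xi^{ac}(TB_{8,3})=\frac{23}{2}$; your formula yields $6+3+\frac{9}{2}-2=\frac{23}{2}$, while the listed formula yields $13$. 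So your derivation is correct and the displayed formula is not (the other two branches do match). The gap in your write-up is that you claim agreement without performing the final substitution; had you done it, you would have had to either correct your computation or, as is actually the case, flag an error in the statement being proved.
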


To conclude, we still need the notion of matching. A \textit{matching} in a
graph $G$ is collection of edges $M$ from $G$ such that no vertex from $G$ is
incident to two edges from $M.$ The \textit{size} of matching is number of
edges it contains. We say that matching $M$ is \textit{perfect} if every
vertex from $G$ is incident to one edge from $M.$ Obviously, only graphs with
even number of vertices can have perfect matching.

\section{Extremal trees}

In this section we want to establish trees with minimum and maximum value of
augmented eccentric connectivity index. First, we will do the minimum. For
that purpose we need the following theorem which gives the transformation of
tree which increases diameter, but decreases the value of augmented ECI.

\begin{theorem}
\label{tm_path}Let $T\not =P_{n}$ be a tree on $n$ vertices and let
$P=v_{0}v_{1}\ldots v_{D}$ be a diametric path in $T$ chosen so that the first
branching vertex is furthest possible from $v_{0}$. Let $v_{i}$ be the first
branching vertex on $P.$ If $D>2$ and $i=1$ and $\deg(v_{i+1})>2$ then let
$u=v_{i+1},$ else let $u=v_{i}.$ Let $w_{1},\ldots,w_{k}$ be $k$ neighbors of
$u$ outside of $P$ ($1\leq k\leq\deg(u)-2$). For tree $T^{\prime}$ obtained
from $T$ by deleting edges $uw_{1},\ldots uw_{k}$ and adding edges $v_{0}%
w_{1},\ldots,v_{0}w_{k}$ holds
\[
\xi^{ac}(T)>\xi^{\alpha c}(T^{\prime}).
\]

\end{theorem}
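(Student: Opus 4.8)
My plan is to compare $\xi^{ac}(T)$ and $\xi^{ac}(T')$ term by term, exploiting that the transformation only touches vertices near $u$ and near $v_0$, while lengthening the diametric path by moving the pendant branches $w_1,\dots,w_k$ (together with whatever hangs off them) to the far end $v_0$. First I would set up notation for the new diameter $D'$ and the new eccentricities: attaching the $w_j$'s at $v_0$ turns $v_0$ from a leaf into a vertex of degree $k+1$ and typically increases $D$ to $D' = D+1$ or more, so eccentricities of most vertices either stay the same or increase, which already pushes the corresponding summands $M(v)/\varepsilon(v)$ downward — provided the numerators $M(v)$ do not grow. The key structural observation is that the choice of $P$ (first branching vertex as far from $v_0$ as possible) and the case split defining $u$ guarantee that the path segment $v_0 v_1 \dots v_{i-1}$ consists of degree-$2$ vertices, so moving branches to $v_0$ changes $M(\cdot)$ only at $v_0$, at $v_1$, at $u$, at the neighbors of $u$, and at the $w_j$'s (and their neighbors) — a bounded, enumerable list.

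The heart of the argument is then a finite case analysis on the local structure at $u$, organized by the two cases in the hypothesis: (a) $u = v_i$ with $i \geq 2$, or $i=1$ with $\deg(v_{i+1}) = 2$; and (b) $u = v_{i+1}$, which forces $i=1$ and $\deg(v_2) > 2$. In each case I would list exactly which vertices change their degree or their neighbor-degree product, write down the old contribution $\sum M(v)/\varepsilon(v)$ over this affected set, write down the new contribution, and show the difference is positive. The gain comes from two sources: (i) $\deg(u)$ drops from $d$ to $d-k$, which strictly decreases $M(v)$ for every neighbor $v$ of $u$ that remains (and there is at least one such, namely $v_{i-1}$ or $v_{i+1}$), and (ii) the $w_j$'s, moved from a vertex of degree $d$ to a vertex $v_0$ of degree $k+1 \le d-1$, contribute smaller numerators at their own neighbors. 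Against this we must account for the \emph{new} edges at $v_0$: $v_0$ itself now has $M(v_0) = \deg(v_1)\cdot\deg(w_1)\cdots\deg(w_k)$ where before it was $\deg(v_1)$, and $v_1$'s neighbor-product picks up the factor $\deg(v_0)=k+1$ instead of $1$. So the inequality to verify, after cancelling the unchanged eccentricities on the "old path" part, reduces to showing that the degree \emph{decrease} at $u$'s side dominates the degree \emph{increase} at $v_0$'s side, which is true essentially because the same multiset of degrees $\{\deg(w_1),\dots,\deg(w_k)\}$ appears on both sides but multiplied against a larger base degree ($\deg(u) \ge k+2$ vs.\ $\deg(v_0) = k+1$ afterward) and divided by a larger-or-equal eccentricity.

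The main obstacle I anticipate is bookkeeping the eccentricity changes precisely enough: lengthening the path can raise the eccentricity of \emph{every} vertex, but by possibly different amounts depending on whether the vertex lies closer to $v_0$ or to $v_D$, and one has to be sure that the summands whose numerators \emph{do} increase (namely $v_0$ and $v_1$) are nonetheless out-weighed — this is where the hypotheses $D>2$ and $n \ne$ the small exceptional sizes, and the "furthest branching vertex" choice of $P$, do real work, since they bound $\varepsilon(v_0)$, $\varepsilon(v_1)$ from below by quantities comparable to $D$, making the denominators large. A secondary nuisance is the degenerate sub-case where $w_j$ itself is a leaf versus where a whole subtree hangs off $w_j$; in the latter case the subtree's internal vertices keep their $M$-values but may gain eccentricity, which only helps, so these sub-cases only strengthen the inequality and can be dispatched quickly. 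I would organize the writeup as: (1) reduce to the affected vertex set; (2) handle eccentricities, showing $\varepsilon_{T'}(v) \ge \varepsilon_T(v)$ for all $v$ with the reverse inequality never strict in a way that hurts; (3) the degree/numerator comparison in cases (a) and (b); (4) combine.
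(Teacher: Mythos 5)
Your high-level strategy is the same as the paper's: observe that no eccentricity decreases, that the only vertex whose degree grows is $v_0$ (so $M(\cdot)$ can grow only at $v_0$, which acquires the factors $\deg(w_j)$, and at $v_1$, which sees the factor $\deg(v_0)$ jump from $1$ to $k+1$; each $M(w_j)$ in fact shrinks, trading the factor $\deg(u)$ for $k+1\le\deg(u)-1$), and then offset these two increases against decreases at $u$ and at the surviving neighbours of $u$. Steps (1) and (2) of your outline are therefore fine.

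The gap is that step (3) --- which you yourself call the heart of the argument --- is only asserted, and the mechanism you offer for it does not suffice. Your ``same multiset of degrees against a larger base degree, divided by a larger-or-equal eccentricity'' argument addresses only the $v_0$ versus $u$ pairing, and even there yields merely a non-strict inequality. It says nothing about the increase at $v_1$: the factor $k+1$ gained there is not a relocated factor from anywhere, and the paper must cancel it by a case-dependent pairing --- against the decrease of $M(v_{i-1})$ from $2m_i$ to $2(m_i-k)$ when $i\ge3$, where the gain and loss are \emph{exactly equal} ($2k$ each) and strict inequality comes solely from $\varepsilon(v_{i-1})\le D-2<D-1=\varepsilon(v_1)$; against the decreases of $M(v_3)$ and of the $M(w_j)$'s when $u=v_2$ (since then $v_{i-1}=v_1$ and the previous pairing is unavailable); and by a direct computation when $u=v_1$ (where $M(v_0)$ actually decreases and all $w_j$ are forced to be leaves). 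Your case split by which clause of the definition produced $u$ lumps $u=v_1$ together with $u=v_i$, $i\ge2$, which is the wrong granularity for these pairings; what matters is the position of $u$ on $P$. Finally, the hypothesis ``$n\ne$ the small exceptional sizes'' that you invoke corresponds to nothing in the statement --- the paper simply verifies $D\le3$ directly and runs the three cases above for $D>3$. As written, your outline would at best give $\xi^{ac}(T)\ge\xi^{ac}(T')$, which is not enough for the strict inequality claimed and needed in Corollary \ref{Cor_path}.
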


\begin{proof}
Note that this transformation does not decrease eccentricity of any vertex. On
the other hand, the only vertex whose degree increases is $v_{0}.$ Let us
denote $m_{i}=\deg(v_{i})$ and $m_{w_{i}}=\deg(w_{i}).$ Cases when $D\leq3 $
are easily verified, therefore we distinguish three remaining cases when
$D>3.$ Tree transformations from these cases are illustrated with Figure
\ref{Figure2}.

Case 1: $D>3$ and $u=v_{1}.$

Note that in this case $m_{2}=2$ and $m_{w_{i}}=1$ for every $i.$ Therefore,
the only vertex for which $M(v)$ increases from $T$ to $T^{\prime}$ is
$v_{1}.$ We have
\begin{align*}
\xi^{ac}(T)-\xi^{\alpha c}(T^{\prime})  & \geq\frac{M(v_{1})}{\varepsilon
(v_{1})}-\frac{M^{\prime}(v_{1})}{\varepsilon^{\prime}(v_{1})}+\frac{M(v_{2}%
)}{\varepsilon(v_{2})}-\frac{M^{\prime}(v_{2})}{\varepsilon^{\prime}(v_{2}%
)}+\sum_{i=1}^{k}\left(  \frac{M(w_{i})}{\varepsilon(w_{i})}-\frac{M^{\prime
}(w_{i})}{\varepsilon^{\prime}(w_{i})}\right)  \geq\\
& \geq\frac{2}{D-1}-\frac{2(k+1)}{D-1}+\frac{m_{1}\cdot m_{3}}{D-2}%
-\frac{\left(  m_{1}-k\right)  \cdot m_{3}}{D-2}+k\left(  \frac{m_{1}}%
{D}-\frac{k+1}{D}\right)  =\\
& =\frac{-2k}{D-1}+\frac{m_{3}\cdot k}{D-2}+\frac{k\cdot\left(  m_{1}%
-k-1\right)  }{D}>\left[  m_{3}\geq2,m_{1}\geq k+1\right]  >0
\end{align*}

Case 2: $D>3$ and $u=v_{2}.$

The only vertices for which $M(v)$ possibly increases are $v_{0}$ and $v_{1}$.
We will neutralize increase in $M(v_{0})$ by decrease in $M(v_{2}),$ and also
neutralize increase in $M(v_{1})$ by decrease in $M(v_{3}),M(w_{1}%
),\ldots,M(w_{k})$. Let $m_{w}=m_{w_{1}}\cdot\ldots\cdot m_{w_{k}}$ and
$c_{2}=M(v_{2})/(m_{w}\cdot m_{1}).$ Note that $c_{2}\geq2$ because of
$m_{3}\geq2$ (which follows from $D>3$). We have
\begin{align*}
\Delta_{1}  & =\frac{M(v_{0})}{\varepsilon(v_{0})}-\frac{M^{\prime}(v_{0}%
)}{\varepsilon^{\prime}(v_{0})}+\frac{M(v_{2})}{\varepsilon(v_{2})}%
-\frac{M^{\prime}(v_{2})}{\varepsilon^{\prime}(v_{2})}\geq\frac{m_{1}}%
{D}-\frac{m_{1}\cdot m_{w}}{D}+\frac{m_{1}\cdot m_{w}\cdot c_{2}}{D-2}%
-\frac{m_{1}\cdot c_{2}}{D-2}=\\
& =-\frac{m_{1}(m_{w}-1)}{D}+\frac{m_{1}\cdot c_{2}\cdot(m_{w}-1)}{D-2}\geq0.
\end{align*}
Now, let $c_{3}=M(v_{3})/m_{2}$. Note that all neighbors of $w_{i}$ except
$u=v_{2}$ are of degree $1$ and therefore $M(w_{i})=m_{2}$ for every $i.$ We
have
\begin{align*}
\Delta_{2}  & =\frac{M(v_{1})}{\varepsilon(v_{1})}-\frac{M^{\prime}(v_{1}%
)}{\varepsilon^{\prime}(v_{1})}+\frac{M(v_{3})}{\varepsilon(v_{3})}%
-\frac{M^{\prime}(v_{3})}{\varepsilon^{\prime}(v_{3})}+%
{\displaystyle\sum\limits_{i=1}^{k}}
\left(  \frac{M(w_{i})}{\varepsilon(w_{i})}-\frac{M^{\prime}(w_{i}%
)}{\varepsilon^{\prime}(w_{i})}\right)  \geq\\
& \geq\frac{m_{2}}{D-1}-\frac{\left(  m_{2}-k\right)  \left(  k+1\right)
}{D-1}+\frac{m_{2}\cdot c_{3}}{\varepsilon(v_{3})}-\frac{(m_{2}-k)\cdot c_{3}%
}{\varepsilon(v_{3})}+%
{\displaystyle\sum\limits_{i=1}^{k}}
\left(  \frac{m_{2}}{D-1}-\frac{k+1}{D-1}\right)  =\\
& =-\frac{k\left(  m_{2}-k-1\right)  }{D-1}+\frac{k\cdot c_{3}}{\varepsilon
(v_{3})}+\frac{k\left(  m_{2}-k-1\right)  }{D-1}>0
\end{align*}
Therefore,
\[
\xi^{ac}(T)-\xi^{\alpha c}(T^{\prime})\geq\Delta_{1}+\Delta_{2}>0.
\]

Case 3: $D>3$ and $u=v_{i}$ ($i\geq3$).

The only vertices for which $M(v)$ possibly increases are $v_{0}$ and $v_{1}$.
We will neutralize increase in $M(v_{0})$ by decrease in $M(v_{i}),$ and also
neutralize increase in $M(v_{1})$ by decrease in $M(v_{i-1})$. Let
$m_{w}=m_{w_{1}}\cdot\ldots\cdot m_{w_{k}}$ and let $c_{i}=M(v_{i})/\left(
m_{i-1}\cdot m_{w}\right)  $ We have
\begin{align*}
\Delta_{1}  & =\frac{M(v_{0})}{\varepsilon(v_{0})}-\frac{M^{\prime}(v_{0}%
)}{\varepsilon^{\prime}(v_{0})}+\frac{M(v_{i})}{\varepsilon(v_{i})}%
-\frac{M^{\prime}(v_{i})}{\varepsilon^{\prime}(v_{i})}\geq\frac{2}{D}%
-\frac{2\cdot m_{w}}{D}+\frac{m_{i-1}\cdot m_{w}\cdot c_{i}}{\varepsilon
(v_{i})}-\frac{m_{i-1}\cdot c_{i}}{\varepsilon(v_{i})}=\\
& =-\frac{2\left(  m_{w}-1\right)  }{D}+\frac{m_{i-1}\cdot c_{i}\cdot
(m_{w}-1)}{\varepsilon(v_{i})}\overset{m_{i-1}=2}{\geq}0.
\end{align*}
Also, from $i\geq3$ we know that $v_{1}\not =v_{i},$ so we have
\begin{align*}
\Delta_{2}  & =\frac{M(v_{1})}{\varepsilon(v_{1})}-\frac{M^{\prime}(v_{1}%
)}{\varepsilon^{\prime}(v_{1})}+\frac{M(v_{i-1})}{\varepsilon(v_{i-1})}%
-\frac{M^{\prime}(v_{i-1})}{\varepsilon^{\prime}(v_{i-1})}\geq\frac{2}%
{D-1}-\frac{2(k+1)}{D-1}+\frac{2\cdot m_{i}}{\varepsilon(v_{i-1})}%
-\frac{2\left(  m_{i}-k\right)  }{\varepsilon(v_{i-1})}=\\
& =-\frac{2k}{D-1}+\frac{2k}{\varepsilon(v_{i-1})}>\left[  \varepsilon
(v_{i-1})<D-1\right]  >0
\end{align*}
Therefore, we conclude
\[
\xi^{ac}(T)-\xi^{\alpha c}(T^{\prime})\geq\Delta_{1}+\Delta_{2}>0.
\]

\end{proof}

\bigskip%

\begin{figure}[ptb]%
\centering
\includegraphics[
height=2.9265in,
width=3.9461in
]%
{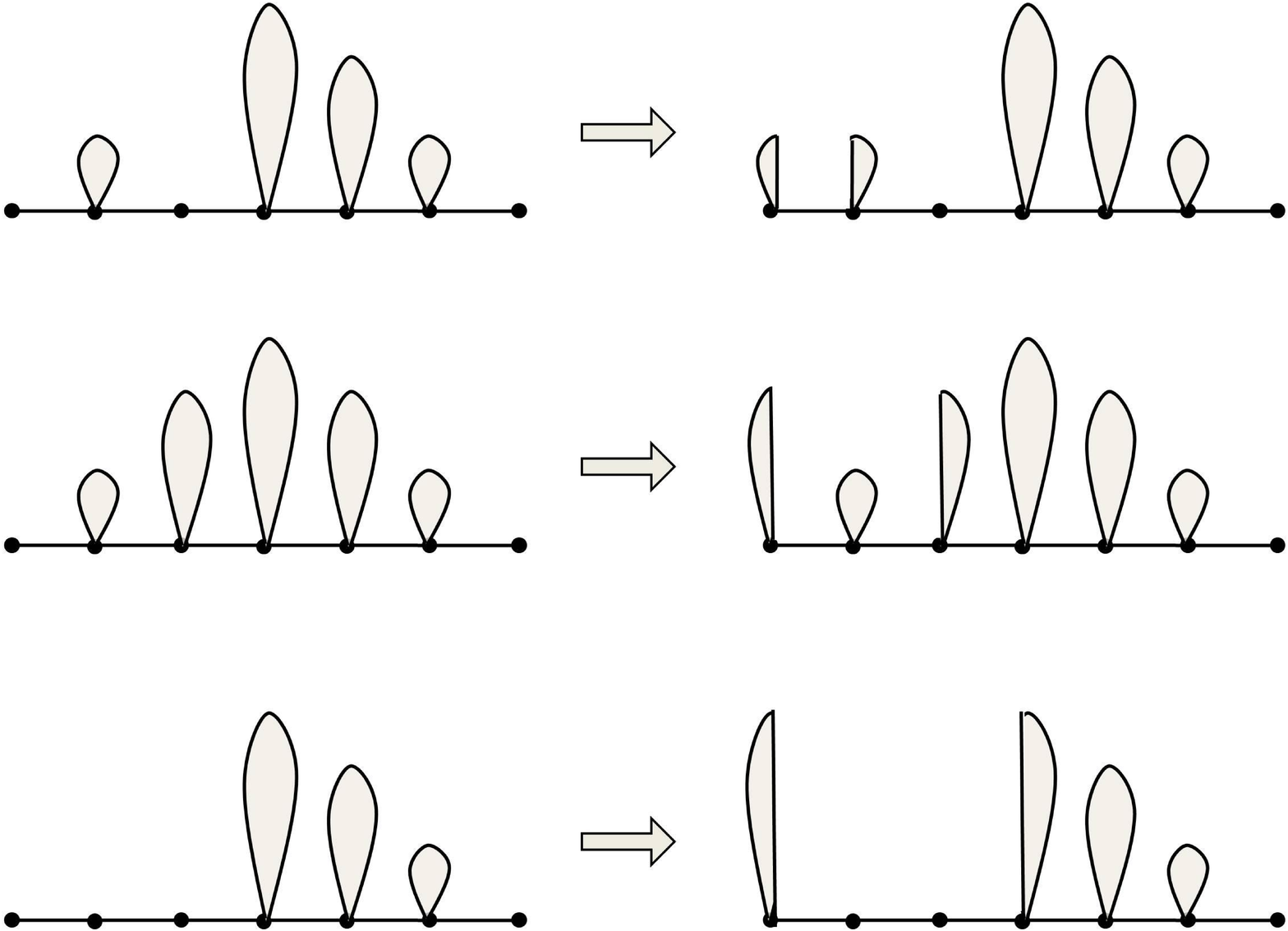}%
\caption{Tree transformations from Cases 1, 2 and 3 of Theorem \ref{tm_path}.}%
\label{Figure2}%
\end{figure}

\begin{corollary}
\label{Cor_path}Let $T\not =P_{n}$ be a tree on $n$ vertices. Then
\[
\xi^{ac}(T)>\xi^{ac}(P_{n}).
\]

\end{corollary}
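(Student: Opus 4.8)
The plan is to derive Corollary~\ref{Cor_path} from Theorem~\ref{tm_path} by an inductive argument on the diameter of $T$, using the fact that the path $P_n$ is the unique tree on $n$ vertices attaining diameter $n-1$. Given any tree $T\neq P_n$, its diameter $D$ satisfies $D\leq n-2$. If $D=1$ or $D=2$ the claim is checked directly (these are $K_2$ and stars $S_n$, and one compares $\xi^{ac}(S_n)=1+\tfrac{(n-1)^2}{2}$ from Proposition~\ref{prop_values} with $\xi^{ac}(P_n)$), so assume $D\geq 3$. The key point is that Theorem~\ref{tm_path} is exactly the tool that lets us move from $T$ to a tree $T'$ on the same number of vertices which is "closer" to a path.

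The main step is to verify that the transformation in Theorem~\ref{tm_path} strictly increases the diameter, so that repeated application terminates. Indeed, the theorem chooses a diametric path $P=v_0v_1\ldots v_D$ with the first branching vertex $v_i$ as far from $v_0$ as possible, picks $u\in\{v_i,v_{i+1}\}$, and reattaches $k\geq 1$ of the off-path neighbors $w_1,\dots,w_k$ of $u$ to the endpoint $v_0$. After this move, the path $w_1 v_0 v_1 \ldots v_j$, where $v_j$ is the vertex $u$ sits at (namely $j=i$ or $j=i+1$), together with the longer of the two sub-branches hanging at $u$ on each side, yields a path strictly longer than $D$; one checks case by case (matching Cases~1--3 of the theorem) that the new diameter $D'$ satisfies $D'\geq D+1$. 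Since the diameter of a tree on $n$ vertices is bounded by $n-1$, after finitely many applications we reach a tree $T^*$ on $n$ vertices with diameter $n-1$, and the only such tree is $P_n$. Along this chain each step strictly decreases $\xi^{ac}$ by Theorem~\ref{tm_path}, hence $\xi^{ac}(T)>\xi^{ac}(T^{(1)})>\cdots>\xi^{ac}(P_n)$, giving the corollary.

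I would organize the write-up as: (1) dispose of $D\leq 2$ by direct computation; (2) for $D\geq 3$, observe that $T$ has a branching vertex and hence Theorem~\ref{tm_path} applies, producing $T'$ with $\xi^{ac}(T)>\xi^{ac}(T')$; (3) prove $\mathrm{diam}(T')>\mathrm{diam}(T)$; (4) conclude by induction on $n-1-\mathrm{diam}(T)$, the base case $\mathrm{diam}(T)=n-1$ being $T=P_n$ itself (vacuously covered, or the statement is trivial). A small subtlety to handle carefully is that after applying the transformation the resulting tree $T'$ might still differ from $P_n$ while having some branching vertices, so one must re-invoke the theorem with a \emph{freshly chosen} diametric path in $T'$; this is fine because the hypotheses of Theorem~\ref{tm_path} only require $T'\neq P_n$ and $D>2$, both of which we re-establish at each stage (if $\mathrm{diam}(T')=2$ we are back in the base case, and if $\mathrm{diam}(T')\geq 3$ we continue).

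The part demanding the most care is step~(3), verifying that the diameter strictly increases under the transformation; this requires tracking, in each of the three cases of Theorem~\ref{tm_path}, where the reattached vertices $w_1,\dots,w_k$ land relative to the old diametric path and confirming no shorter configuration sneaks in as the new diameter. Everything else is bookkeeping: the strict monotonicity of $\xi^{ac}$ along the chain is immediate from the theorem, and termination is forced by the universal bound $\mathrm{diam}\leq n-1$ together with $P_n$ being the unique extremal-diameter tree.
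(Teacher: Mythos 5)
Your proposal is correct and follows essentially the same route as the paper: the paper's proof likewise observes that the transformation of Theorem~\ref{tm_path} strictly increases the diameter, so iterating it must terminate at $P_n$ while $\xi^{ac}$ strictly decreases at each step. Your additional bookkeeping (the bound $\mathrm{diam}\leq n-1$, re-choosing a diametric path at each stage, and the simple observation that $w_1v_0v_1\ldots v_D$ is a path of length $D+1$ in $T'$) only makes explicit what the paper leaves implicit.
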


\begin{proof}
Note that transformation of tree from Theorem \ref{tm_path} increases diameter
of the tree. Therefore, applying that transformation consecutively on $T$ we
obtain in the end path $P_{n}$ which by that Theorem has smaller value of
$\xi^{ac}$ then $T.$
\end{proof}

\bigskip

Now that we found a tree with minimum value of augmented ECI, we want to find
a tree with maximum value of augmented ECI. One could expect a star $S_{n}$ to
have maximum value of augmented ECI, as that was the case of other
eccentricity based indices. But, comparing the value of $S_{16}$ and
$TM_{16,5}$ we obtain
\begin{align*}
\xi^{ac}(S_{16})  & =1+15\cdot\frac{15}{2}=\frac{227}{2}=113.5,\\
\xi^{ac}(TB_{16,5})  & =\frac{3^{5}}{2}+5\cdot\frac{5}{3}+10\cdot\frac{3}%
{4}=\frac{412}{3}=137.33,
\end{align*}
which clearly indicates that $S_{n}$ is not maximal tree with respect to value
of augmented ECI. Now, we want to establish which trees are maximal. For that
purpose, we need the following theorem.

\begin{theorem}
\label{tm_maxStar}Let $T$ be a tree on $n$ vertices with diameter $D\geq5$ and
let $P=v_{0}v_{1}\ldots v_{D}$ be a diametric path such that $M(v_{2}%
)/\deg(v_{1})\leq M(v_{D-2})/\deg(v_{D-1})$. Let $w_{1},\ldots,v_{k}$ be all
pendent vertices of $v_{1}.$ For a tree $T^{\prime}$ obtained from $T$ by
deleting edges $v_{1}w_{1},\ldots,v_{1}w_{k}$ and adding vertices
$v_{D-1}w_{1},\ldots,v_{D-1}w_{k}$ holds
\[
\xi^{ac}(T)<\xi^{ac}(T^{\prime}).
\]

\end{theorem}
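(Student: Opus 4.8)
The plan is to move the $k$ pendent vertices from the near end $v_1$ of a diametric path to the far end $v_{D-1}$, and to show this strictly increases $\xi^{ac}$. First I would observe that such a transformation changes eccentricities only by non-increasing them (in fact leaving $D$ and the eccentricities of the internal vertices essentially unchanged, since a diametric path of length $D$ still survives), so the eccentricity denominators can be bounded in a way that helps rather than hurts; the real content is in the change of the degree-products $M(\cdot)$. I would set $m_i=\deg(v_i)$ (computed in $T$) and carefully list the vertices whose $M$-value changes: in $T$ the vertex $v_1$ has $k$ pendent neighbors $w_1,\dots,w_k$ and the path-neighbor $v_2$, so $\deg(v_1)=k+2$; after the move $\deg(v_1)=2$ and $\deg(v_{D-1})$ increases by $k$. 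The vertices affected are therefore $v_1$, $v_2$, $v_3$, the $w_j$, and symmetrically $v_{D-1}$, $v_{D-2}$, $v_{D-3}$, and possibly $v_{D}$.

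Next I would split the difference $\xi^{ac}(T^{\prime})-\xi^{ac}(T)$ into two groups, mirroring the structure of the proof of Theorem~\ref{tm_path}: one group collecting the \emph{gains} coming from the far end (the increase of $M(v_{D-1})$, which in turn multiplies through to $M(v_{D-2})$ and $M(v_{D})$ and the pendent vertices now attached at $v_{D-1}$), and one group collecting the \emph{losses} coming from the near end (the decrease of $M(v_1)$, hence of $M(v_2)$). The hypothesis $M(v_2)/\deg(v_1)\le M(v_{D-2})/\deg(v_{D-1})$ is precisely the normalization that lets the far-end gain dominate the near-end loss term-by-term: writing $M(v_2)=\deg(v_1)\cdot a$ and $M(v_{D-2})=\deg(v_{D-1})\cdot b$ with $a\le b$, the loss from $v_2$ is proportional to $a$ times $(\,(k+2)-2\,)=k$ over its eccentricity, while the gain from $v_{D-2}$ is proportional to $b$ times the multiplicative jump in $\deg(v_{D-1})$ over its eccentricity, and since $\varepsilon(v_2)$ and $\varepsilon(v_{D-2})$ are comparable (both lie strictly below $D$), $b\ge a$ pushes the inequality the right way. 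The pendent vertices contribute an unambiguous gain because a leaf's $M$-value equals the degree of its unique neighbor, and that neighbor's degree strictly increases (from $\deg(v_1)$ to $\deg(v_{D-1})+k\ge k+2$... here one must check it is at least as large, which again is where the hypothesis enters). The increase of $M(v_{D-1})$ from $\prod$ over its old neighbors to that product times $k+2$ (the new degree of $v_1$ is $2$, but $v_{D-1}$ gains $k$ leaves of degree $1$, so actually its own degree goes up while the \emph{product} $M(v_{D-1})$ is unchanged) — I would be careful here: $M(v_{D-1})$ itself does not change, what changes is $M(v_{D-2})$ and $M(v_D)$ and $M(w_j)$, all of which feel the larger $\deg(v_{D-1})$.

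Concretely, I would write, with $\varepsilon'$ denoting eccentricities in $T'$ and $M'$ the new degree-products,
\begin{align*}
\xi^{ac}(T^{\prime})-\xi^{ac}(T)&=\Bigl(\tfrac{M'(v_{D-2})}{\varepsilon'(v_{D-2})}-\tfrac{M(v_{D-2})}{\varepsilon(v_{D-2})}+\tfrac{M'(v_D)}{\varepsilon'(v_D)}-\tfrac{M(v_D)}{\varepsilon(v_D)}+\textstyle\sum_{j}\tfrac{M'(w_j)}{\varepsilon'(w_j)}\Bigr)\\
&\quad-\Bigl(\tfrac{M(v_2)}{\varepsilon(v_2)}-\tfrac{M'(v_2)}{\varepsilon'(v_2)}+\tfrac{M(v_1)}{\varepsilon(v_1)}-\tfrac{M'(v_1)}{\varepsilon'(v_1)}+\textstyle\sum_{j}\tfrac{M(w_j)}{\varepsilon(w_j)}\Bigr),
\end{align*}
and then bound the first parenthesis from below and the second from above using $D\ge5$ (so that all the eccentricities involved are at least $3$ and the ones at the two ends are symmetric), the fact that $M(w_j)=\deg(v_1)=k+2$ while $M'(w_j)=\deg(v_{D-1})+k$, and the hypothesis relating the two ratios. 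The main obstacle I anticipate is the bookkeeping of which internal vertices ($v_3$ versus $v_{D-3}$, and whether $v_{D-1}$ is itself branching in $T$) actually see a changed $M$-value, together with the edge cases $D=5,6$ where the two ``ends'' of the path are close and the groups of affected vertices overlap; these should be handled by a short separate verification, exactly as the $D\le3$ cases were dispatched in Theorem~\ref{tm_path}. Once the term-by-term domination is set up, each inequality reduces to something of the form ``$b\ge a$, degrees are $\ge1$, eccentricities are positive and appropriately ordered,'' which is routine.
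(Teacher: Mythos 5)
Your central pairing is exactly the paper's: eccentricities only decrease, the only genuine loss among the degree products is at $v_{2}$ (whose factor $\deg(v_{1})$ drops), and that loss is covered by the gain at $v_{D-2}$ (whose factor $\deg(v_{D-1})$ grows by the same additive amount), the hypothesis $M(v_{2})/\deg(v_{1})\leq M(v_{D-2})/\deg(v_{D-1})$ together with $\varepsilon'(v_{2})\geq\varepsilon'(v_{D-2})$ being precisely what closes the estimate
\[
\frac{M(v_{2})-M'(v_{2})}{\varepsilon'(v_{2})}-\frac{M'(v_{D-2})-M(v_{D-2})}{\varepsilon'(v_{D-2})}\;\leq\;(m_{1}-1)\left(\frac{M(v_{2})/m_{1}}{\varepsilon'(v_{2})}-\frac{M(v_{D-2})/m_{D-1}}{\varepsilon'(v_{D-2})}\right)\;\leq\;0.
\]
In the paper that single pairing, plus the observation that every other vertex contributes a nonnegative amount to $\xi^{ac}(T')-\xi^{ac}(T)$, is the whole proof.

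There is, however, a concrete gap in your bookkeeping. You set $\deg(v_{1})=k+2$, i.e.\ you keep $v_{0}$ attached to $v_{1}$ and move only the $k$ other leaves. Under that reading $M(v_{0})=\deg(v_{1})$ drops from $k+2$ to $2$, so $v_{0}$ is an affected vertex carrying a genuine loss, yet it appears in neither of your two groups, and your displayed identity for $\xi^{ac}(T')-\xi^{ac}(T)$ is false as stated (it also omits the gains at the other pendent neighbours of $v_{D-1}$ and at every vertex whose eccentricity strictly drops; those omissions are harmless but mean you want ``$\geq$'', not ``$=$''). The paper reads ``all pendent vertices of $v_{1}$'' as including $v_{0}$, which is a leaf adjacent to $v_{1}$: then $\deg(v_{1})$ drops to $1$, no residual loss at $v_{0}$ arises, and the strictly negative term $\frac{M(v_{0})}{\varepsilon(v_{0})}-\frac{M'(v_{0})}{\varepsilon'(v_{0})}$ (the new neighbour $v_{D-1}$ of $v_{0}$ has larger degree than the old one) is exactly what supplies the strict inequality --- a point your sketch never pins down, since the moved leaves' gains are only $\geq0$ when $\deg(v_{D-1})=2$. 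Your reading is repairable: the $v_{0}$ loss is at most $k/D$ and is absorbed by the $v_{D}$ gain, which is at least $k/D$ because $M(v_{D})$ increases from $m_{D-1}$ to $m_{D-1}+k$ while $\varepsilon(v_{D})=D$ does not increase; that same $v_{D}$ term would also give you strictness. But as written the decomposition does not close. Finally, $v_{3}$ and $v_{D-3}$ need not be listed among the affected vertices: none of their neighbours changes degree, so their $M$-values are untouched, and no separate treatment of $D=5,6$ is required beyond noting that the two displayed differences concern disjoint pairs of vertices when $D\geq5$.
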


\begin{proof}
Note that by this transformation eccentricities of vertices do not increase.
The only vertex for which $M(v)$ decreases is $v_{2}.$ We will neutralize this
decrease by increase for $v_{D-1}.$ For the simplicity sake, let $m_{i}%
=\deg(v_{i})$ for $v_{i}\in P.$ Taking into account that $\varepsilon
(v)\geq\varepsilon^{\prime}(v)$ for every $v\in T$ we have
\begin{align*}
\xi^{ac}(T)-\xi^{ac}(T^{\prime})  & \leq\frac{M(v_{2})}{\varepsilon(v_{2}%
)}-\frac{M^{\prime}(v_{2})}{\varepsilon^{\prime}(v_{2})}+\frac{M(v_{D-2}%
)}{\varepsilon(v_{D-2})}-\frac{M^{\prime}(v_{D-2})}{\varepsilon^{\prime
}(v_{D-2})}\leq\\
& \leq\frac{\left(  m_{1}-1\right)  \cdot M(v_{2})/m_{1}}{\varepsilon^{\prime
}(v_{2})}+\frac{\left(  m_{D-1}-m_{D-1}-m_{1}+1\right)  \cdot M(v_{D-1}%
)/m_{D-1}}{\varepsilon^{\prime}(v_{D-2})}=\\
& =\frac{\left(  m_{1}-1\right)  \cdot M(v_{2})/m_{1}}{\varepsilon^{\prime
}(v_{2})}-\frac{\left(  m_{1}-1\right)  \cdot M(v_{D-1})/m_{D-1}}%
{\varepsilon^{\prime}(v_{D-2})}%
\end{align*}
Since $\varepsilon^{\prime}(v_{2})\geq\varepsilon^{\prime}(v_{D-2})$ by
construction and since
\[
M(v_{2})/m_{1}\leq M(v_{D-1})/m_{D-1}%
\]
by assumption of the Theorem, we conclude $\xi^{ac}(T)-\xi^{ac}(T^{\prime
})\leq0.$ Since also obviously
\[
\frac{M(v_{0})}{\varepsilon(v_{0})}-\frac{M^{\prime}(v_{0})}{\varepsilon
^{\prime}(v_{0})}<0
\]
we obtain $\xi^{ac}(T)-\xi^{ac}(T^{\prime})<0.$
\end{proof}

\bigskip

Note that in every tree $T$ there must exist a diametric path which satisfies
conditions of Theorem \ref{tm_maxStar}, for either the condition holds for
diametric path $P$ or for the same path with vertices labeled in reverse
order. Applying this transformation repeatedly on a tree with diameter greater
than $5,$ we will finally obtain a tree of diameter $4$ with greater value of
augmented ECI.

\bigskip

Therefore, a tree with maximum value of augmented ECI lies among trees with
$D\leq4.$ Let us now consider such trees.

\begin{lemma}
\label{lemma_caterpilar}Let $T\not =S_{n}$ be a tree on $n$ vertices with
diameter $D\leq4$ such that central vertices have at most two non-leaf
neighbors. Then $\xi^{ac}(T)<\xi^{ac}(S_{n}).$
\end{lemma}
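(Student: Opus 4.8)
The plan is to reduce to a direct comparison of $\xi^{ac}$ vertex by vertex, splitting the vertex set of such a tree $T$ according to the (at most two) non-leaf neighbours of the center. Since $D\le 4$ and $T\ne S_n$, the diameter is either $3$ or $4$. If $D=3$ there are two central vertices, each of whose ``private'' leaves contribute a bounded amount; if $D=4$ there is a unique central vertex $c$ with eccentricity $2$, every leaf at distance $2$ from $c$ has eccentricity $4$, and the non-leaf neighbours of $c$ have eccentricity $3$. Under the hypothesis $c$ has at most two non-leaf neighbours, say $x$ and $y$ (possibly only one), with $\deg(x)=a$, $\deg(y)=b$, while the remaining $\deg(c)-(\text{number of non-leaf nbrs})$ neighbours of $c$ are leaves. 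I would set up notation $n-1 = (\text{leaves adjacent to }c) + (a-1) + (b-1)$ and write $\xi^{ac}(T)$ explicitly as a function of $a,b$ and the number of leaf-neighbours of $c$.

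First I would dispose of the case $D=3$, which is small and can be checked by the same bookkeeping (or is already covered by the ``easily verified'' $D\le 3$ remark in Theorem \ref{tm_path}'s proof, adapted to the inequality direction needed here). Then, for $D=4$, I would write out the four groups of summands: (i) the center $c$ contributes $M(c)/2$, where $M(c)$ is the product of degrees of its neighbours, i.e.\ $a\cdot b$ times a product of $1$'s; (ii) each non-leaf neighbour $x$ of $c$ contributes $M(x)/3$, and $M(x)$ is $\deg(c)$ times a product of $1$'s from $x$'s pendant leaves; (iii) each leaf adjacent to $c$ contributes $\deg(c)/2$; (iv) each leaf at distance $2$ contributes $\deg(x)/4$ or $\deg(y)/4$. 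Summing, $\xi^{ac}(T)$ becomes a low-degree polynomial/rational expression in $a$, $b$ and $d:=\deg(c)$, and $\xi^{ac}(S_n) = 1 + (n-1)^2/2$ from Proposition \ref{prop_values}. The claimed inequality then reduces to an elementary inequality in these integer parameters subject to $a+b-2+(\text{pendant leaves of }c)=n-1$ and $a,b\ge 2$, $d\ge 2$.

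The main obstacle I anticipate is the bookkeeping when $\deg(c)$ is large: the term $\deg(c)/2$ summed over the pendant leaves of $c$ grows quadratically in $\deg(c)$, as does the star's $\tfrac{(n-1)^2}{2}$, so the two quadratics must be compared carefully, and one must check that the ``savings'' from having non-leaf neighbours (which pushes leaves out to eccentricity $4$, dividing their contribution by $4$ instead of $2$, and makes $M$-values products of $1$'s rather than large products) genuinely dominates. Concretely, I expect the cleanest route is to argue that moving from $S_n$ toward $T$ — detaching leaves from $c$ and reattaching them one level out under $x$ or $y$ — strictly decreases $\xi^{ac}$ at each step, which is essentially the reverse of the transformation philosophy in Theorem \ref{tm_maxStar} but now staying within diameter $\le 4$; this monotonicity, checked on the explicit expressions above, yields $\xi^{ac}(T) < \xi^{ac}(S_n)$ as soon as $T\ne S_n$. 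The degenerate subcase where $c$ has exactly one non-leaf neighbour and no pendant leaves (so $T$ is essentially a ``double star''/broom) should be checked separately but is a one-line computation.
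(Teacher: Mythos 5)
Your main plan (explicit vertex-by-vertex computation) is workable and genuinely different from the paper's proof, which is a two-line reduction: the paper observes that every tree satisfying the hypotheses is obtained from $S_n$ by applying the diameter-increasing, index-decreasing transformation of Theorem \ref{tm_path} once (if $D=3$, giving a double star) or twice (if $D=4$, giving a ``double broom''), so the strict inequality is inherited immediately. Your ``cleanest route'' remark is exactly this idea, except that you attribute it to the reverse of Theorem \ref{tm_maxStar}; the relevant already-proven tool is Theorem \ref{tm_path}, and invoking it in bulk (all $k$ leaves at once) avoids having to re-verify monotonicity for one-leaf-at-a-time intermediate trees. Your direct computation does close: writing $\deg(x)=a$, $\deg(y)=b$, and $\ell$ for the pendant leaves of $c$, one gets $\xi^{ac}(T)=\frac{ab}{2}+\frac{(\ell+2)^2}{3}+\frac{a(a-1)+b(b-1)}{4}$ for $D=4$, and the difference $\xi^{ac}(S_n)-\xi^{ac}(T)$ simplifies to $\frac{\ell^2}{6}+\frac{a^2+b^2}{4}+\frac{ab}{2}+\ell\left(a+b-\frac{4}{3}\right)+\frac{a+b}{4}-\frac{1}{3}$, which is manifestly positive for $a,b\ge 2$, so the quadratic-versus-quadratic comparison you worried about is not delicate. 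One bookkeeping slip to fix: in a diameter-$4$ tree a pendant leaf of the central vertex $c$ has eccentricity $3$, not $2$ (its farthest vertex is a leaf hanging from $x$ or $y$), so group (iii) contributes $\deg(c)/3$ per leaf; your value $\deg(c)/2$ overestimates $\xi^{ac}(T)$, which happens to be harmless for the direction of the inequality but should be corrected. Also note that for $D=4$ the hypothesis forces exactly two non-leaf neighbours of $c$ (one is impossible), and for $D=3$ the condition is automatic since every diameter-$3$ tree is a double star, so your case list is complete.
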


\begin{proof}
This lemma is corollary of Theorem \ref{tm_path}, since every tree $T$
satisfying conditions of this lemma can be obtained from $S_{n}$ by applying
once (if $D=3$) or twice (if $D=4$) the transformation of tree from that theorem.
\end{proof}

\bigskip

As a consequence of this lemma, we can conclude that the "problem" are trees
with diameter $4$ and at least three non-leaf neighbors of central vertex. Let
us now consider such trees. Before we proceed, let us note that tree
transformations from some of the following Lemmas are illustrated in Figure
\ref{Figure3}.%

\begin{figure}[ptb]%
\centering
\includegraphics[
height=2.4206in,
width=2.9871in
]%
{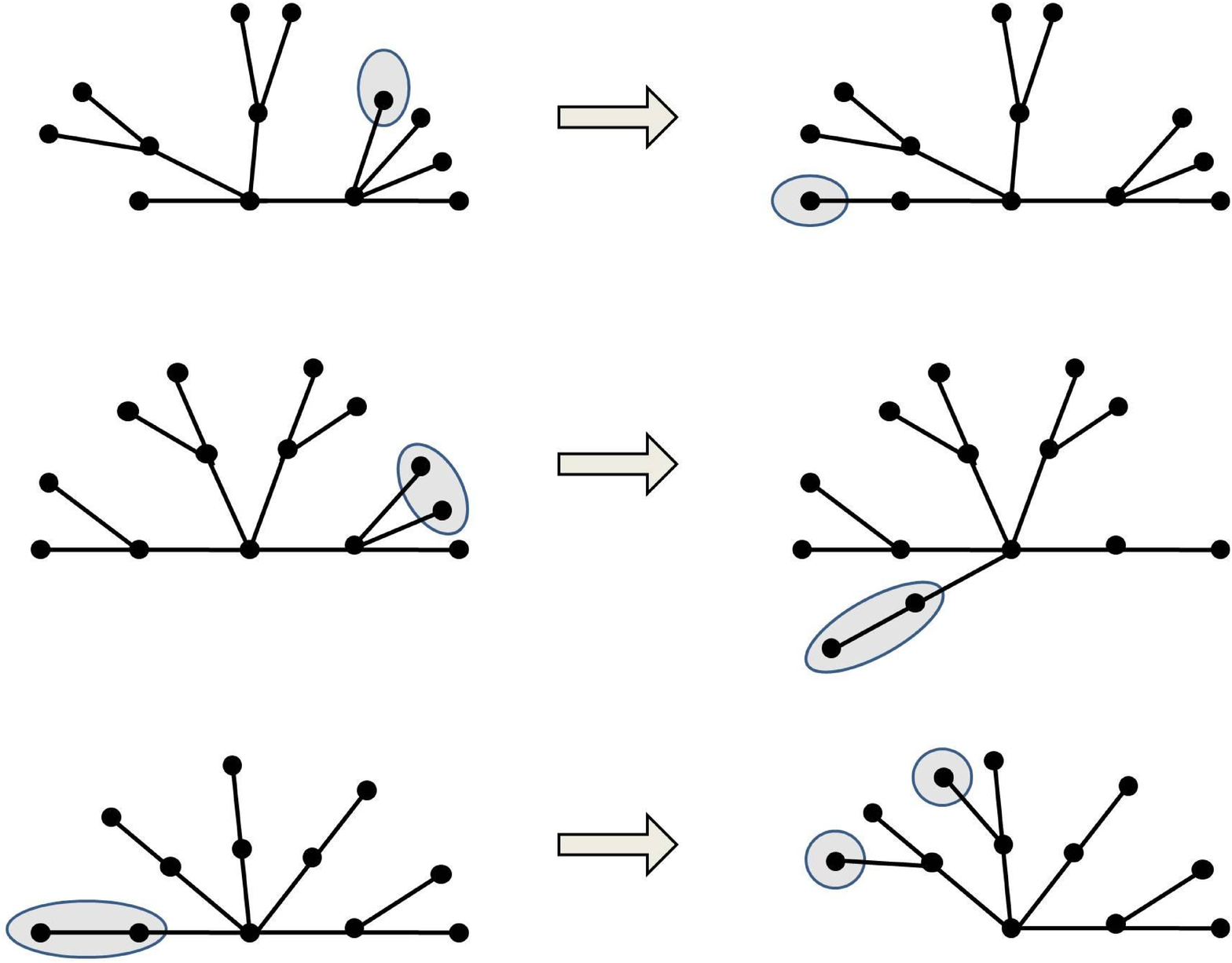}%
\caption{Tree transformations from Lemmas \ref{lemma_balance},
\ref{lemma_deg2or3} and \ref{lemma_max(p=3)} respectively.}%
\label{Figure3}%
\end{figure}

\begin{lemma}
\label{lemma_balance}Let $T$ be a tree on $n$ vertices with diameter $D=4$
such that central vertex $u$ has at least three non-leaf neighbors. Let
$v_{1},\ldots v_{k}$ be all neighbors of $u$ labeled so that $\deg(v_{1}%
)\leq\ldots\leq\deg(v_{k}).$ If $\deg(v_{k})-\deg(v_{1})\geq2$ than for a tree
$T^{\prime}$ obtained from $T$ by deleting a pending vertex of $v_{k}$ and
adding pending vertex to $v_{1}$ holds
\[
\xi^{ac}(T)<\xi^{ac}(T^{\prime}).
\]

\end{lemma}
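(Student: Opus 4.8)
The index $\xi^{ac}$ is a sum of terms $M(v)/\varepsilon(v)$, so as usual I will track exactly which vertices have their numerator change under the transformation and which have their eccentricity change. Since $T$ has diameter $4$, all of $u, v_1,\dots,v_k$ have eccentricity $2$, while the leaves (pendant vertices hanging off the $v_i$) have eccentricity $3$ or $4$ (eccentricity $3$ if attached to a non-leaf neighbor of $u$ that itself sits within distance... actually in a diameter-$4$ tree every leaf has eccentricity $4$ unless it is a leaf-neighbor of $u$, which is impossible here since $u$ has a non-leaf neighbor, so call those pendant-of-$v_i$ vertices to have eccentricity $3$; wait — a pendant vertex $w$ of $v_i$ has $d(w,\cdot)$ maximized at a pendant vertex of some other $v_j$, giving $d=4$, so $\varepsilon(w)=4$ for all such $w$, except in degenerate small cases). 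I will first pin down these eccentricities cleanly: in a tree of diameter $4$ with central vertex $u$, $\varepsilon(u)=2$, $\varepsilon(v_i)=3$ if $v_i$ is a non-leaf and $2$ only for... let me restate: $\varepsilon(v_i) \le 3$ always and equals $2$ iff $v_i$ is central, which it is not; so $\varepsilon(v_i)=3$ for the non-leaf $v_i$ and for leaf-neighbors of $u$ we'd need diameter $<4$ — here there may still be some leaf $v_i$ with $\deg(v_i)=1$, and such a leaf has $\varepsilon=4$. Pendant vertices of any $v_i$ have $\varepsilon=4$. The transformation does not change any eccentricity, since it only moves one pendant vertex between two non-leaf neighbors of $u$ (and $v_k$ stays non-leaf: $\deg(v_k)\ge \deg(v_1)+2\ge 3$ so it keeps degree $\ge 2$).

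**Identifying the changing terms.** Removing a pendant $w$ of $v_k$ and attaching a new pendant $w'$ to $v_1$: the degree of $v_k$ drops by $1$, the degree of $v_1$ rises by $1$, and no other degree changes. Hence $M(v)$ changes only for (a) $u$, whose neighbors include both $v_1$ and $v_k$, so $M'(u) = M(u)\cdot\frac{(\deg v_1+1)(\deg v_k-1)}{\deg v_1 \deg v_k}$; (b) the pendant vertices of $v_k$ other than $w$, each of which has $M = \deg(v_k)$ dropping to $\deg(v_k)-1$ — there are $\deg(v_k)-2$ of these (excluding $w$ and excluding $u$); (c) the pendant vertices of $v_1$, each with $M=\deg(v_1)$ rising to $\deg(v_1)+1$ — there are $\deg(v_1)-1$ of these in $T$, plus the new vertex $w'$ with $M(w')=\deg(v_1)+1$, so effectively $\deg(v_1)$ such terms in $T'$; (d) the vertex $w$ disappears (it contributed $\deg(v_k)/4$ in $T$) and $w'$ appears (contributing $(\deg v_1 + 1)/\varepsilon$ in $T'$); (e) nothing else — in particular $v_1, v_k$ themselves: $M(v_1)$ is the product of degrees of $v_1$'s neighbors, which are $u$ and some leaves, all unchanged, so $M(v_1)$ is unchanged, similarly $M(v_k)$. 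Writing $a = \deg(v_1)$, $b=\deg(v_k)$, $\mu = M(u)/(ab)$ (an integer, the product of the degrees of the other neighbors of $u$), I will assemble $\xi^{ac}(T') - \xi^{ac}(T)$ as a single expression in $a,b,\mu$ and the eccentricities $2$ and $4$.

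**The core inequality.** After substitution the difference should take the shape
\[
\xi^{ac}(T')-\xi^{ac}(T) = \frac{\mu\bigl((a+1)(b-1)-ab\bigr)}{2} + \frac{(b-2)(-1) \;+\; a(+1) \;+\; \bigl[(a+1)-b\bigr] \;+\;\text{(small corrections)}}{4},
\]
and the first term is $\frac{\mu(b-a-1)}{2}$. The key structural facts I will use: $\mu \ge 1$ trivially, but in fact $\mu \ge 2\cdot 2 = 4$ because $u$ has at least three non-leaf neighbors so at least two neighbors besides $v_1,v_k$ have degree $\ge 2$ — wait, I only know $u$ has $\ge 3$ non-leaf neighbors, and $v_1$ might be one of them (if $a\ge 2$) or not; either way among $v_2,\dots,v_{k-1}$ at least one is a non-leaf, so $\mu\ge 2$, and if $b-a-1 \ge 0$ we are essentially done from the $\mu$-term alone once we check the $O(1/4)$ terms don't overwhelm it. So the real work is the case $b - a - 1 < 0$, i.e. $b = a+1$ — but that contradicts the hypothesis $b - a \ge 2$! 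So in fact $b \ge a+2$, hence $b-a-1 \ge 1$, and the $\mu$-term alone contributes at least $\mu/2 \ge 1$. Thus the plan reduces to: show the combined $1/4$-weight terms are bounded below by something like $-\mu/2$, or more simply bound them below by a constant $> -1$ using $b\le$ (degree bound) isn't needed — just expand carefully. The main obstacle I anticipate is bookkeeping the exact count of pendant vertices and handling the boundary/degenerate possibilities: when $a=1$ (so $v_1$ was itself a leaf of $u$), the term counts in (c) shift, and $v_1$ changing from a leaf to degree $2$ could in principle change $\varepsilon$ of some vertex — but it cannot, since diameter stays $4$ and center stays $\{u\}$. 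I will dispatch the $a=1$ subcase separately at the start (there $M(v_1)$ going from $1$ to $2$ roughly doubles several terms, strictly increasing the index), then assume $a\ge 2$ for the main computation, where the clean formula above applies and $\frac{\mu(b-a-1)}{2}\ge \frac{2\cdot 1}{2}=1$ comfortably dominates the bounded $1/4$-weight remainder, giving $\xi^{ac}(T')>\xi^{ac}(T)$.
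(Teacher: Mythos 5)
Your setup is the same as the paper's: you correctly identify that eccentricities are unchanged, that the only affected terms are $u$ (weight $1/2$) and the pendant vertices of $v_1$ and $v_k$ including the moved one (weight $1/4$), that $M(v_1)$ and $M(v_k)$ themselves do not change, and that $\mu=M(u)/(ab)\geq 2$ because $u$ has a third non-leaf neighbor besides $v_1,v_k$. (Your aside that a leaf-neighbor of $u$ would have eccentricity $4$ is wrong --- it has eccentricity $3$ --- but this is harmless since such vertices contribute $\deg(u)/3$ unchanged.)

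The genuine flaw is in your finishing step. You propose to dominate the negative $1/4$-weight terms either by a constant $>-1$ or by $-\mu/2$, and you describe that remainder as ``bounded.'' It is not: summed exactly, the $1/4$-weight contribution to $\xi^{ac}(T')-\xi^{ac}(T)$ is
\[
\frac{-(b-2)+(a-1)+\bigl((a+1)-b\bigr)}{4}=-\frac{b-a-1}{2},
\]
which tends to $-\infty$ as $b-a$ grows (e.g.\ $a=1$, $b=10$ gives $-4$), so neither of your proposed lower bounds holds and the fixed quantity $\mu(b-a-1)/2\geq\mu/2$ you extract from the $u$-term does not by itself ``comfortably dominate'' it. What saves the argument --- and is exactly the paper's computation --- is that the positive $u$-term $\mu(b-a-1)/2$ carries the same factor $(b-a-1)$, so the total collapses to
\[
\xi^{ac}(T')-\xi^{ac}(T)=\frac{(\mu-1)(b-a-1)}{2}\geq\frac{1}{2}>0
\]
using $\mu\geq2$ and $b-a\geq2$. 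So your plan is one careful expansion away from the paper's proof, but the specific domination strategy you state would fail; you must keep the factor $(b-a-1)$ on both sides and use the factorization rather than a constant bound. (Also note your displayed remainder has $a(+1)$ where it should be $(a-1)(+1)$; with that slip the sum would not factor cleanly.) The separate treatment of $a=1$ you anticipate is unnecessary: the count $a-1=0$ of old pendants of $v_1$ handles it uniformly.
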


\begin{proof}
Note that eccentricities of vertices remain the same after this
transformation. The only vertex whose degree decreases is $v_{k},$ therefore
$M(v)$ decreases possibly for $u$ and pending vertices of $v_{k}.$ Let us
denote $m_{i}=\deg v_{i}$ and $c=M(u)/\left(  m_{1}\cdot m_{k}\right)  .$ Note
that $c\geq2$ because central vertex has at least three non-leaf neighbors.
Considering vertex $u$ and pending vertices of $v_{1}$ and $v_{k}$ we obtain
\begin{align*}
\xi(T)-\xi(T^{\prime})  & =\frac{m_{1}\cdot c\cdot m_{k}}{2}-\frac{\left(
m_{1}+1\right)  \cdot c\cdot\left(  m_{k}-1\right)  }{2}+\left(
m_{1}-1\right)  \left(  \frac{m_{1}}{4}-\frac{m_{1}+1}{4}\right)  +\\
& +\left(  m_{k}-2\right)  \left(  \frac{m_{k}}{4}-\frac{m_{k}-1}{4}\right)
+\left(  \frac{m_{k}}{4}-\frac{m_{1}+1}{4}\right) \\
& =\frac{1}{2}\left(  c-1\right)  \left(  m_{1}-m_{k}+1\right)  \leq\frac
{1}{2}\left(  c-1\right)  \left(  -2+1\right)  <0.
\end{align*}

\end{proof}

\bigskip

Note that Lemma \ref{lemma_balance} holds even for trees with only two
non-pendant neighbors of central vertex. But then we do not necessarily have
strict inequality (constant $c$ from proof can be $1$). Lemma
\ref{lemma_balance} can be applied repeatedly until we obtain degree balanced
tree. Therefore, we conclude that among trees on $n$ vertices with $D=4$ and
central vertex $u$ with given degree $\deg(u)=m$ degree balanced tree
$TB_{n,m}$ has maximum value of $\xi^{ac}$. Now, we can obtain the increase in
$\xi^{ac}$ by changing the degree of central vertex. For that purpose we need
following lemma.

\begin{lemma}
\label{lemma_deg2or3}Let $T$ be a tree on $n$ vertices with diameter $D=4$
such that central vertex $u$ has at least three non-leaf neighbors. Let
$v_{1},\ldots v_{k}$ be all neighbors of $u$ labeled so that $\deg(v_{1}%
)\leq\ldots\leq\deg(v_{k}).$ If $\deg(v_{k})-\deg(v_{1})\leq1$ and $\deg
(v_{k})\geq4$ than for a tree $T^{\prime}$ obtained from $T$ by deleting two
pendant vertices of $v_{k}$ and adding pendant path of length $2 $ to $u$
holds
\[
\xi^{ac}(T)<\xi^{ac}(T^{\prime}).
\]

\end{lemma}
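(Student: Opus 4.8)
The plan is to mimic the structure of the proof of Lemma \ref{lemma_balance}: identify which vertices change their $M$-value under the transformation, set up the difference $\xi^{ac}(T)-\xi^{ac}(T^{\prime})$ as an explicit sum over exactly those vertices, and show it is negative. First I would pin down the effect of the transformation on eccentricities and degrees. Deleting two leaves from $v_k$ and attaching a pendant path $u\,x\,y$ of length $2$ at the central vertex $u$ keeps the diameter equal to $4$ (since $D=4$ already and the new path has length $2$ from the center), so no eccentricity changes: $u$ stays central with $\varepsilon=2$, all neighbors of $u$ have $\varepsilon=3$, and all leaves at distance $2$ from $u$ have $\varepsilon=4$; the two new vertices $x,y$ get $\varepsilon(x)=3$ and $\varepsilon(y)=4$. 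The degree changes are: $\deg(v_k)$ drops by $2$, $\deg(u)$ rises by $1$ (new neighbor $x$), and $x,y$ are new with $\deg(x)=2$, $\deg(y)=1$.

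Next I would list the vertices whose contribution $M(v)/\varepsilon(v)$ changes. Writing $m_i=\deg(v_i)$ and $c=M(u)/\bigl(m_1\cdots m_k\bigr)$ as in the previous lemma (here more naturally $c=M(u)/\bigl(\prod_i m_i\bigr)$, but since only $v_k$'s degree changes it is cleanest to set $c'=M(u)/m_k$, the product of degrees of all neighbors of $u$ other than $v_k$), the affected vertices are: (i) $u$ itself, since one factor $m_k$ in $M(u)$ becomes $m_k-2$ and a new factor $\deg(x)=2$ appears, so $M(u)$ goes from $c' m_k$ to $c'(m_k-2)\cdot 2 = 2c'(m_k-2)$; (ii) the $m_k-1$ leaves hanging off $v_k$ — two of them are deleted, and the remaining $m_k-3$ have their $M$-value change from $m_k$ to $m_k-2$; (iii) the other neighbors $v_j$ of $u$ ($j\ne k$), whose $M$-value picks up the new factor $\deg(x)=2$ from $\deg(u)$ increasing; wait — actually $M(v_j)$ is the product of degrees of $v_j$'s neighbors, one of which is $u$, so increasing $\deg(u)$ by $1$ multiplies each such $M(v_j)$; (iv) the new vertices $x$ (with $M(x)=\deg(u)\cdot\deg(y)=\deg(u)\cdot 1$, using the new degree of $u$) and $y$ (with $M(y)=\deg(x)=2$); (v) $v_k$ itself, whose $M$-value is a product over its neighbors including $u$, so it changes because $\deg(u)$ increased. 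Assembling all these into one expression for $\xi^{ac}(T)-\xi^{ac}(T^{\prime})$ with the correct eccentricity denominators ($2$ for $u$, $3$ for the $v_j$ and for $x$, $4$ for the leaves and for $y$) and simplifying should collapse to something manifestly negative, using $c'\ge 2$ (at least three non-leaf neighbors, but note now $v_k$ might end up with degree $2$, so I should be careful: the hypothesis $\deg(v_k)\ge 4$ guarantees that after the transformation $v_k$ still has degree $\ge 2$, and the "$c'\ge$ product of the other non-leaf neighbor degrees" uses the original tree) and the degree hypotheses $m_k\ge 4$, $m_1\ge m_k-1\ge 3$.

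The main obstacle I expect is bookkeeping rather than ideas: correctly accounting for the leaves of $v_k$ whose $M$-value is merely $m_k$ (their unique neighbor $v_k$ loses degree) versus the two that vanish, and simultaneously tracking that \emph{every} neighbor of $u$ — not just $v_k$ — sees its $M$-value multiplied because $\deg(u)$ went up; this "global" factor of $\deg(u+1)/\deg(u)$ across all the $v_j$ is the delicate part and is exactly what should make $T^{\prime}$ win despite the local loss at $v_k$. A subtlety worth isolating: whether to attach the length-$2$ path at $u$ or whether the lemma statement intends the path to be attached so that $u$ gains a degree-$2$ neighbor $x$ and a degree-$1$ grandchild $y$ — I will take that reading, and I would double-check the edge case $m_k=4$ separately (then $v_k$ becomes degree $2$ and has exactly one leaf left, contributing a small positive term to $\xi^{ac}(T')$).

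After setting
\[
m_i=\deg(v_i),\qquad c=\frac{M(u)}{m_k}=\prod_{j\ne k}m_j\ \ (\text{so }c\ge 3\text{ since }m_1\ge 3),\qquad s=\deg(u)=k,
\]
the difference $\xi^{ac}(T)-\xi^{ac}(T^{\prime})$ becomes an explicit rational function of $m_k,c,s$ which I would show is $<0$ using $m_k\ge 4$, $c\ge 3$, $s\ge 3$; the gain at $u$ (denominator $2$) of roughly $\tfrac{c}{2}\bigl(2(m_k-2)\cdot\tfrac{s+1}{?}-m_k\bigr)$ together with the gains at the $v_j$ (denominator $3$) should dominate the loss $(m_k-3)\cdot\tfrac{m_k-(m_k-2)}{4}=\tfrac{m_k-3}{2}$ from the surviving leaves plus the deletion terms, and the residual simplifies to a negative quantity, completing the proof.
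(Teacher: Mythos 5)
Your plan follows essentially the same route as the paper's proof: the same bookkeeping over the same set of affected vertices ($u$, all $k$ neighbors of $u$ including $v_k$, the surviving and deleted leaves of $v_k$, and the two new path vertices $x,y$), with the same constant $c=M(u)/m_k$ and the same use of the degree hypotheses. The algebra you leave unfinished does collapse as expected --- the paper obtains $\xi^{ac}(T)-\xi^{ac}(T^{\prime})=-\tfrac{c}{2}(m_k-4)-\tfrac{2}{3}k+m_k-\tfrac{7}{3}\le -\tfrac{1}{3}<0$ using $c\ge 6$, $m_k\ge 4$, $k\ge 3$, and your weaker bound $c\ge 3$ would also suffice since the coefficient of $c$ is nonpositive.
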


\begin{proof}
Let us denote $k=\deg(u)$, $m_{i}=\deg(v_{i})$ and $c=M(u)/m_{k}.$ Considering
$u,v_{1},\ldots,v_{k}$ and pendant vertices of $v_{k}$ we obtain
\begin{align*}
\xi(T)-\xi(T^{\prime})  & =\frac{c\cdot m_{k}}{2}-\frac{2\cdot c\cdot\left(
m_{k}-2\right)  }{2}+k\left(  \frac{k}{3}-\frac{k+1}{3}\right)  +\\
& +\left(  m_{k}-3\right)  \left(  \frac{m_{k}}{4}-\frac{m_{k}-2}{4}\right)
+\left(  2\cdot\frac{m_{k}}{4}-\left(  \frac{k+1}{3}+\frac{2}{4}\right)
\right) \\
& =-\frac{c}{2}\left(  m_{k}-4\right)  -\frac{2}{3}k+m_{k}-\frac{7}{3}%
\leq\lbrack c\geq6]\leq\\
& \leq\frac{29}{3}-2m_{k}-\frac{2}{3}k\leq\lbrack m_{k}\geq4,k\geq3]\leq
-\frac{1}{3}<0
\end{align*}
which concludes the proof.
\end{proof}

\bigskip

By combining Lemmas \ref{lemma_balance} and \ref{lemma_deg2or3},\ we can
conclude that we have restricted our search for trees with extremal $\xi^{ac}$
to $S_{n}$ or degree balanced trees $TB_{n,k}$ with
\[
p=\left\lceil \frac{n-1}{k}\right\rceil \leq3.
\]
Now we will consider separately cases when $p=2$ and $p=3.$ First we will
consider case when $p=3.$ For that purpose we need the following lemma.

\begin{lemma}
\label{lemma_max(p=3)}Let $T$ be a tree on $n$ vertices with diameter $D=4$
such that central vertex $u$ has at least three non-leaf neighbors. If
$T=TB_{n,k}$ where $p=\left\lceil \frac{n-1}{k}\right\rceil =3$ then
\[
\xi^{ac}(T)\leq\xi^{ac}(TB_{n,\left\lceil \frac{n-1}{3}\right\rceil })
\]
with equality if and only if $k=\left\lceil \frac{n-1}{3}\right\rceil .$
\end{lemma}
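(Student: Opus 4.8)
The plan is to fix $p=3$ and compare degree balanced trees $TB_{n,k}$ as $k$ varies over the admissible range, showing that the value of $\xi^{ac}$ is maximised at the smallest admissible $k$, namely $k=\left\lceil\frac{n-1}{3}\right\rceil$. Recall from the Preliminaries that for $p=3$ the admissible values of $k$ satisfy $\frac{n-1}{3}\leq k<\frac{n-1}{2}$, and that in $TB_{n,k}$ the central vertex $u$ has $k$ neighbours, each of degree $2$ or $3$; write $a$ for the number of neighbours of degree $3$ and $b=k-a$ for the number of degree $2$, so $2a+b=n-1-k$ (counting the leaves at distance $2$) — more precisely $a+2a+b\cdot 1 = n-1$ after accounting for which vertices are leaves, so $2a+b=n-1-k$. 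All eccentricities are fixed: $\varepsilon(u)=2$, neighbours of $u$ have eccentricity $3$, and leaves have eccentricity $4$. Hence $\xi^{ac}(TB_{n,k})=\frac{M(u)}{2}+\frac{1}{3}\sum_{i=1}^{k}\frac{\deg(v_i)^{?}}{\,}\cdots$; the key point is that $M(u)=3^{a}2^{b}$, each neighbour $v_i$ contributes $\frac{\deg(u)\cdot(\text{product of its leaf-neighbour degrees})}{3}=\frac{k}{3}$ (since its leaf neighbours have degree $1$), and each leaf contributes $\frac{\deg(\text{its parent})}{4}$, which sums to $\frac{1}{4}(3a+2b - k)$ or similar. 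So $\xi^{ac}(TB_{n,k})$ is, up to a term depending only on $n$, equal to $\frac{1}{2}\cdot 3^{a}2^{b}$ plus lower-order linear terms in $a,b,k$.

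The heart of the argument is then to show that moving from $TB_{n,k}$ to $TB_{n,k-1}$ (when $k-1$ is still admissible for $p=3$, i.e.\ $k-1\geq\left\lceil\frac{n-1}{3}\right\rceil$) strictly increases $\xi^{ac}$. The natural route is to exhibit this as an explicit tree transformation and compute the difference directly, exactly in the style of Lemmas~\ref{lemma_balance} and~\ref{lemma_deg2or3}. Concretely: take a degree-$2$ neighbour $v$ of $u$ (one exists since $k>\left\lceil\frac{n-1}{3}\right\rceil$ forces $b\geq 1$), delete the edge from $u$ to $v$ together with the leaf hanging off $v$, and re-attach those two vertices as a pendant path of length $2$ to some other neighbour of $u$ — or equivalently re-attach one leaf to make a degree-$3$ neighbour and use the other to patch the count. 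After the move the central vertex has degree $k-1$, so $M(u)$ changes from $3^{a}2^{b}$ to $3^{a+1}2^{b-2}$ (one fewer neighbour, with one degree-$2$ neighbour promoted to degree $3$), i.e.\ $M(u)$ is multiplied by $\frac{3}{4}$... here one must be careful with the bookkeeping, but the dominant effect is the change in the exponential term $3^{a}2^{b}$, and since $a$ is increasing toward its maximum this term grows. Writing $\Delta=\xi^{ac}(T')-\xi^{ac}(T)$, I expect $\Delta$ to split as (change in $\frac{M(u)}{2}$) $+$ (bounded linear correction from the at most finitely many neighbour- and leaf-vertices whose $M$-value or eccentricity changes), and the exponential term dominates once $n$ is not too small; the finitely many small cases can be checked against the explicit formula in Proposition~\ref{prop_values}.

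The main obstacle I anticipate is purely the bookkeeping: unlike the earlier lemmas, here several vertices change status at once (the demoted central vertex, the promoted neighbour, the relocated leaves, and all the other leaves of $u$ whose parent-degree is $k$ resp.\ $k-1$), so the linear correction term is a sum of $O(n)$ small terms and one must verify that it does not overwhelm the exponential gain — in fact it cannot, because the correction is at worst linear in $n$ while $3^{a}2^{b}$ with $2a+b=n-1-k$ grows at least like $2^{(n-1-k)/2}$, which beats any linear function for $n$ beyond a small threshold. The equality case $k=\left\lceil\frac{n-1}{3}\right\rceil$ then follows because at that value no degree-$2$ neighbour is available to move (the balance is perfect or almost perfect), so the transformation cannot be applied, and every admissible $TB_{n,k}$ with larger $k$ is strictly dominated by iterating the transformation down to $k=\left\lceil\frac{n-1}{3}\right\rceil$. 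One should double-check the borderline: when $k=\left\lceil\frac{n-1}{3}\right\rceil$ and $n\not\equiv 1\pmod 3$ there are still a few degree-$2$ neighbours, but then $k-1$ is no longer admissible for $p=3$ (it would force $p=4$), so the inequality chain terminates correctly at the stated tree.
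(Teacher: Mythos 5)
Your overall strategy coincides with the paper's: iterate a local transformation that lowers the central degree from $k$ to $k-1$ by converting degree-$2$ neighbours of $u$ into degree-$3$ ones, and argue that the exponential gain in $M(u)$ dominates the $O(k)$ linear losses elsewhere. But as written the proposal has concrete defects. The most serious is the bookkeeping of the dominant term, which you get wrong in a direction that would sink the whole argument. The correct move (the paper's: for a degree-$2$ neighbour $v_1$ with pendant vertex $w_1$, delete $uv_1$ and $v_1w_1$ and reattach $v_1$ to $v_2$ and $w_1$ to $v_3$, where $v_2,v_3$ are two \emph{other} degree-$2$ neighbours --- which is why one needs at least three degree-$2$ neighbours, i.e.\ exactly the condition $k>\lceil\frac{n-1}{3}\rceil$) changes $M(u)$ from $3^a2^b$ to $3^{a+2}2^{b-3}=\tfrac{9}{8}\cdot 3^a2^b$, a gain of $M(u)/8$. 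Your count $3^{a+1}2^{b-2}=\tfrac{3}{4}\cdot 3^a2^b$ is a \emph{decrease}; if that were correct there would be no exponential gain at $u$ to set against the linear correction, and the proof could not work. Your alternative form of the move --- reattaching the two displaced vertices as a pendant path of length $2$ to a neighbour of $u$ --- is inadmissible: it creates a vertex at distance $3$ from $u$, raising the diameter to $5$ and leaving the class of trees under consideration while changing eccentricities.

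Second, the decisive inequality is never established. ``Exponential beats linear beyond a small threshold, and the finitely many small cases can be checked'' is not a proof: the threshold is not identified, and Proposition \ref{prop_values} gives the value only for $k=\lceil\frac{n-1}{3}\rceil$, not for the general $TB_{n,k}$ with $p=3$ that you would need for the small cases. The paper closes this cleanly with the explicit estimate $\xi^{ac}(T)-\xi^{ac}(T')\le \tfrac{2}{3}k-\tfrac{1}{2}c-\tfrac{11}{6}$ where $c=M(u)/8\ge 2^{k-3}$, which is negative for every $k\ge 3$, so no exceptional cases arise. Your termination discussion (the move is unavailable precisely when fewer than three degree-$2$ neighbours remain, i.e.\ when $k=\lceil\frac{n-1}{3}\rceil$) is correct and matches the paper, but to turn the proposal into a proof you must fix the transformation, redo the $M(u)$ count, and carry out the comparison of the exponential and linear terms explicitly.
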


\begin{proof}
From $p=3$ follows that all neighbors of $u$ are of degree $3$ and possibly
$2.$ Suppose $p\not =\left\lceil \frac{n-1}{3}\right\rceil .$ That means there
are at least three neighbors of $u$ of degree $2.$ Let us denote all neighbors
of $u$ with $v_{1},\ldots,v_{k}$ so that $m_{1}\leq m_{2}\leq\ldots\leq m_{k}$
where $m_{i}$ denotes $\deg(v_{i}).$ Let $w_{1}$ be a pendant vertex of
$v_{1}$. Now, let $T^{\prime}$ be a tree obtained from $T$ by deleting edges
$uv_{1}$, $v_{1}w_{1}$ and adding edges $v_{2}v_{1},$ $v_{3}w_{1}.$ We will
show that $\xi^{ac}$ has increased by this transformation. For that purpose
let $c=M(u)/8.$ Considering vertices $u,v_{1},\ldots,v_{k}$ and pendant
vertices of $v_{1},v_{2}$ and $v_{3}$ we obtain
\begin{align*}
\xi^{ac}(T)-\xi^{ac}(T^{\prime})  & \leq\left(  \frac{c\cdot8}{2}-\frac
{c\cdot9}{2}\right)  +\left(  k-1\right)  \left(  \frac{k}{3}-\frac{k-1}%
{3}\right)  +\left(  \frac{k}{3}-\frac{3}{4}\right)  +3\left(  \frac{2}%
{4}-\frac{3}{4}\right) \\
& =\frac{2}{3}k-\frac{1}{2}c-\frac{11}{6}\leq\lbrack c\geq2^{k-3}]\leq\\
& \leq\frac{2}{3}k-2^{k-4}-\frac{11}{6}\leq\frac{2}{3}3-2^{3-4}-\frac{11}%
{6}=-\frac{1}{3}<0.
\end{align*}
Repeating this transformation, we obtain $TB_{n,\left\lceil \frac{n-1}%
{3}\right\rceil }$ which proves the lemma.
\end{proof}

\bigskip

Now, we want to address trees $TB_{n,k}$ with $p=\left\lceil \frac{n-1}%
{k}\right\rceil =2.$

\begin{lemma}
\label{lemma_max(p=2)}Let $T$ be a tree on $n$ vertices with diameter $D=4$
such that central vertex $u$ has at least three non-leaf neighbors. If
$T=TB_{n,k}$ where $p=\left\lceil \frac{n-1}{k}\right\rceil =2.$ Then%
\[
\xi^{\alpha c}(T)<\xi^{ac}(S_{n})\text{ \ \ or \ \ }\xi^{\alpha c}(T)<\xi
^{ac}(TB_{n,\left\lceil \frac{n-1}{3}\right\rceil })
\]

\end{lemma}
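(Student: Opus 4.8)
The plan is to analyze the structure of $T = TB_{n,k}$ with $p = \lceil (n-1)/k \rceil = 2$. Since $p=2$, every neighbor of the central vertex $u$ has degree $1$ or $2$, and since $u$ has at least three non-leaf neighbors, at least three of the $v_i$ have degree exactly $2$. Write $a$ for the number of degree-$2$ neighbors and $b$ for the number of leaf-neighbors of $u$, so $k = a+b$, $a \geq 3$, and $n = 1 + k + a = 1 + 2a + b$. Then $M(u) = 2^a$, and I would compute $\xi^{ac}(T)$ explicitly as a function of $a$ and $b$ using $\varepsilon(u)=2$, $\varepsilon(v_i)=3$, and $\varepsilon(\text{leaf})=4$ for leaves at distance $2$ from $u$ and $\varepsilon(\text{leaf-neighbor of }u)=3$. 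This gives something of the form
\[
\xi^{ac}(T) = \frac{2^a}{2} + \frac{a k^2 + b\cdot(\text{lower order})}{3} + \frac{a(2^{?})}{4} + \cdots,
\]
which I would simplify to a clean closed form; the dominant term is $2^{a-1}$.

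Next I would split into two regimes according to the size of $b$ relative to $a$. The key comparison targets are $\xi^{ac}(S_n) = 1 + (n-1)^2/2$ (dominant term quadratic in $n$) and $\xi^{ac}(TB_{n,\lceil (n-1)/3\rceil})$, which from Proposition \ref{prop_values} has dominant term roughly $3^{(n-1)/3}/2$. The rough heuristic is: when $a$ is small (equivalently $b$ is large, so $T$ is ``close'' to a star), the star wins; when $a$ is large (so $n \approx 2a$ and $2^{a-1} \approx 2^{n/2-1}$), we must beat $3^{n/3}$, and since $2^{1/2} < 3^{1/3}$, the $p=3$ balanced tree wins for $n$ large enough. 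So I would find a threshold $a_0$ (a small explicit constant, likely around $4$–$6$) and argue: for $a \leq a_0$, bound $\xi^{ac}(T)$ above by a quantity that is at most $\xi^{ac}(S_n)$; for $a \geq a_0$, bound $\xi^{ac}(T)$ above by a quantity at most $\xi^{ac}(TB_{n,\lceil(n-1)/3\rceil})$.

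For the large-$a$ case I would use monotonicity: increasing $a$ by one (removing a pendant path of length $2$ from $u$ and reattaching it as a degree-$2$ branch — essentially the reverse direction of Lemma \ref{lemma_deg2or3}, or a direct transformation) changes $\xi^{ac}$ by a controlled amount, so it suffices to check the comparison at a single base value of $a$ and then verify the inequality propagates; alternatively, since for fixed $n$ the only free parameter among $p=2$ trees is essentially whether we are in the ``almost-star'' range, I would directly compare the explicit formula for $\xi^{ac}(TB_{n,k})$ with $p=2$ against the explicit formula for $\xi^{ac}(TB_{n,\lceil(n-1)/3\rceil})$ from Proposition \ref{prop_values}, reducing everything to an inequality of the shape $2^{a-1} + (\text{quadratic in }n) \leq 3^{(n-1)/3}/2 + (\text{quadratic in }n)$, valid once the exponential terms dominate, i.e. for $n$ above some small bound; the finitely many small $n$ are checked by hand.

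The main obstacle I anticipate is the bookkeeping in the intermediate range where $a$ is moderate and $b$ is moderate: there $2^{a-1}$ is not yet large enough to beat $3^{(n-1)/3}/2$ (because $n = 1+2a+b$ can be made large by taking $b$ large, pushing up the $p=3$ target while $2^{a-1}$ stays fixed), yet $T$ is also not star-like enough for the quadratic $(n-1)^2/2$ of $S_n$ to obviously dominate. Resolving this cleanly requires choosing the split point $a_0$ carefully and verifying that for $a \le a_0$ the bound $\xi^{ac}(T) \le 2^{a_0-1} + O(n^2) \le 1 + (n-1)^2/2 = \xi^{ac}(S_n)$ actually holds for all admissible $b$ (using $a \le a_0$ fixed so the exponential contribution is a constant), and symmetrically that for $a \ge a_0$ the exponential $2^{a-1}$ already exceeds everything coming from the bounded-degree structure, so the $p=3$ bound applies. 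Once the right $a_0$ is pinned down, each branch is a routine polynomial-versus-exponential estimate plus a finite check for small $n$.
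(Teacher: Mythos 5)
Your parametrization is the same one the paper uses (the number $t$ of degree-$2$ neighbours of the centre, giving $\xi^{ac}(T)=2^{t-1}+\frac{(n-t-1)^2}{3}+\frac{t}{2}$ with $n=1+2t+b$ where $b$ is the number of leaf neighbours), but from there the routes diverge, and the divergence is where your gap sits. The paper never compares this expression directly with the closed forms for $\xi^{ac}(S_n)$ or $\xi^{ac}(TB_{n,\lceil\frac{n-1}{3}\rceil})$. Instead it fixes $n$, regards the value as a function $f(t)$ of the single variable $t$ on the admissible interval, and notes that $f'(t)=2^{t-1}\ln 2+\frac{2}{3}t+\frac{7}{6}-\frac{2}{3}n$ is increasing with $f'(2)<0<f'(\frac{n-1}{2})$ for $n\ge 7$; hence $f$ is maximized at an endpoint of the interval. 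The endpoint $t=2$ is a tree with only two non-leaf neighbours of the centre, already dominated by $S_n$ via Lemma \ref{lemma_caterpilar}, and the endpoint $t=\lfloor\frac{n-1}{2}\rfloor$ is reduced by one further small transformation to cases covered by the earlier lemmas. No exponential-versus-polynomial estimate is ever needed.

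Your alternative (a threshold $a_0$ on $t$, with the star winning below it and $TB_{n,\lceil\frac{n-1}{3}\rceil}$ winning above it) can likely be completed, but as written it is not a proof: the decisive inequalities are exactly the ones you defer, and the quantitative guesses you do commit to are wrong in ways that matter. Star domination at $b=0$ (the worst case, since $\xi^{ac}(S_n)-\xi^{ac}(T)$ is increasing in $b$) amounts to $2^{a-1}<\frac{5a^2}{3}-\frac{a}{2}+1$, which holds for $3\le a\le 7$ and fails at $a=8$ ($128$ versus $\approx 103.7$); so the threshold is forced to be $a_0=8$, not ``around $4$--$6$'', and your claim that for $a\le a_0$ the star wins for \emph{all} admissible $b$ is false unless $a_0\le 7$. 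The branch $a\ge 8$ is then tight at its base point: for $a=8$, $b=0$ ($n=17$) one has $\xi^{ac}(T)\approx 153.3$, while the uniform lower bound $\frac{2}{9}3^{(n-1)/3}\approx 77.9$ for $\xi^{ac}(TB_{n,\lceil\frac{n-1}{3}\rceil})$ is far too weak -- the true value is $182.5$ -- so you must carry the ceiling in the exponent and split on $n\bmod 3$. In short, the ``routine estimate plus finite check'' is genuinely delicate precisely where your two regimes meet, and none of it has been carried out; the convexity-in-$t$ reduction for fixed $n$ is the missing idea that lets the paper avoid all of it.
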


\begin{proof}
Note that it has to be $n\geq7$ for a tree $T$ to be able to satisfy
conditions of lemma. From $p=\left\lceil \frac{n-1}{k}\right\rceil =2$ follows
that neighbors of central vertex have degree $1$ or $2.$ Let $u$ be a central
vertex in $T$ and let $t$ neighbors of $u$ have degree $2.$ First note that
$2\leq t\leq\frac{n-1}{2}.$ Also, note that
\begin{align*}
n  & =1+2t+(k-t)=1+t+k,\\
k  & =n-t-1.
\end{align*}
Now we have
\[
\xi^{\alpha c}(TB_{n,k})=\frac{2^{t}}{2}+k\cdot\frac{k}{3}+t\cdot\frac{2}%
{4}=2^{t-1}+\frac{\left(  n-t-1\right)  ^{2}}{3}+\frac{t}{2}=f(t).
\]
Let us analyze obtained function $f(t).$ We have
\[
f^{\prime}(t)=2^{t-1}\cdot\ln2+\frac{2}{3}t+\frac{7}{6}-\frac{2}{3}n
\]
This is obviously increasing function in $k,$ therefore from
\begin{align*}
f^{\prime}(2)  & =2^{2-1}\cdot\ln2+\frac{2}{3}\cdot2+\frac{7}{6}-\frac{2}%
{3}n<0\text{ for }n\geq7.\\
f^{\prime}(\frac{n-1}{2})  & =2^{\frac{n-1}{2}-1}\cdot\ln2+\frac{2}{3}%
\cdot\frac{n-1}{2}+\frac{7}{6}-\frac{2}{3}n>0\text{ for }n\geq7,
\end{align*}
we conclude that maximum of $f(t)=\xi^{\alpha c}(TB_{n,k})$ is obtained for
$t=2$ or $t=\left\lfloor \frac{n-1}{2}\right\rfloor .$ If $t=2$ then by Lemma
\ref{lemma_caterpilar} we conclude that $\xi^{\alpha c}(T)<\xi^{ac}(S_{n}).$
If $t=\left\lfloor \frac{n-1}{2}\right\rfloor $ and $n$ is odd then by Lemma
\ref{lemma_caterpilar} we conclude that $\xi^{\alpha c}(T)<\xi^{ac}%
(TM_{n,3}).$ If $t=\left\lfloor \frac{n-1}{2}\right\rfloor $ and $n$ is odd,
then before we can apply Lemma \ref{lemma_caterpilar}, we have to prove that
tree $T$ with $t=\left\lfloor \frac{n-1}{2}\right\rfloor $ has smaller
$\xi^{ac}$ than tree $T^{\prime}$ obtained from it by deleting last pending
vertex of central vertex and adding one pending vertex to one neighbor of
central vertex. We have
\begin{align*}
\xi^{ac}(T)-\xi^{ac}(T^{\prime})  & =\left(  \frac{2^{t}}{2}+\frac{(t+1)^{2}%
}{3}+\frac{t}{2}\right)  -\left(  \frac{3\cdot2^{t-1}}{2}+\frac{t^{2}}%
{3}+\frac{t-1}{2}+2\cdot\frac{3}{4}\right)  =\\
& =\frac{2}{3}t-\frac{1}{4}2^{t}-\frac{2}{3}\leq-\frac{1}{3}<0
\end{align*}
which completes the proof.
\end{proof}

\bigskip

Therefore, a tree with extremal value of augmented ECI must be either $S_{n}$
or $TB_{n,\left\lceil \frac{n-1}{3}\right\rceil }.$ To decide which is it, we
need following lemma.

\begin{lemma}
Holds
\[
\xi^{ac}(TB_{n,\left\lceil \frac{n-1}{3}\right\rceil })>\xi^{ac}(S_{n}).
\]
if and only if $n\geq16.$
\end{lemma}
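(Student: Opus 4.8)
The plan is to compare the two explicit formulas from Proposition~\ref{prop_values}, items 2 and 3, splitting into the three residue classes of $n$ modulo $3$. Write $\xi^{ac}(S_n) = 1 + \tfrac{(n-1)^2}{2}$ and substitute $n = 3k+1$, $n=3k$, $n=3k-1$ respectively into $\xi^{ac}(TB_{n,\lceil(n-1)/3\rceil})$, so that in each case the inequality $\xi^{ac}(TB_{n,\lceil(n-1)/3\rceil}) > \xi^{ac}(S_n)$ becomes an inequality of the form $A^k > Q(k)$ where $A \in \{\tfrac{3}{2}\cdot\tfrac{2}{3}=1,\dots\}$ — more precisely an exponential term (roughly $3^k/2$, $3^{k-1}$, or $2\cdot 3^{k-2}$) must dominate a quadratic polynomial in $k$ coming from the difference of the $\tfrac{k^2}{3}+\tfrac{3k}{2}$ terms and the $\tfrac{(n-1)^2}{2}$ term. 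Since $n-1$ is a linear function of $k$, that polynomial side is genuinely quadratic in $k$, so each case reduces to a one-variable inequality ``$3^{k}$-type growth beats a concrete quadratic''.

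I would handle each of the three cases as follows. First rearrange to isolate the exponential: e.g. for $n = 3k+1$ the claim is $\tfrac{3^k}{2} + \tfrac{k^2}{3} + \tfrac{3k}{2} > 1 + \tfrac{(3k)^2}{2} = 1 + \tfrac{9k^2}{2}$, i.e. $\tfrac{3^k}{2} > 1 + \tfrac{9k^2}{2} - \tfrac{k^2}{3} - \tfrac{3k}{2} = 1 + \tfrac{25k^2}{6} - \tfrac{3k}{2}$, equivalently $3^k > 2 + \tfrac{25k^2}{3} - 3k$. This is a clean monotonicity/induction statement in $k$: check the threshold value of $k$ by hand, then show that once $3^k$ exceeds the quadratic, the gap only widens because $3^{k+1} - 3^k = 2\cdot 3^k$ grows faster than the increment of the quadratic (whose discrete derivative is linear). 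The other two cases ($n = 3k$ and $n = 3k-1$) are structurally identical with slightly different constants. Throughout, translate the threshold on $k$ back to a threshold on $n$: in each residue class the exponential wins precisely from some $k_0$ onward, and one checks directly that the combined cutoff is $n \geq 16$ (and that $n = 6,7,\dots,15$ fail, which is immediate from the displayed numerical values for $n=16$ and a couple of analogous evaluations, e.g. the already-computed $\xi^{ac}(S_{16}) = 113.5 < 137.33 = \xi^{ac}(TB_{16,5})$ together with the reverse inequality at $n=15$).

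The only mild subtlety — not really an obstacle — is bookkeeping across the three residue classes so that the union of the per-class thresholds is exactly $\{n : n \geq 16\}$ and not, say, $n \geq 15$ or $n\geq 17$; this forces an explicit check of the boundary cases $n = 13,14,15,16$ in each relevant class, which is a finite verification using the formulas in Proposition~\ref{prop_values}. For the asymptotic (large-$k$) part, a clean way to avoid induction is to note $3^k \geq 1 + 2k + 2k^2$ for all $k\geq 2$ (itself an easy induction, or: $3^k = (1+2)^k \geq 1 + 2k + \binom{k}{2}4 = 1 + 2k + 2k(k-1) = 1 + 2k^2$, hmm) and then absorb the remaining quadratic slack by taking $k$ large enough; whichever elementary bound one prefers, the point is that an exponential with base $3$ dominates any fixed quadratic past an explicit point, and that point turns out to correspond to $n = 16$.
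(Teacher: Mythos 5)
Your proposal follows the paper's proof essentially verbatim: the paper likewise substitutes $n=3k+1$, $n=3k$, $n=3k-1$ into the formulas of Proposition~\ref{prop_values}, obtains the same three differences (e.g.\ $\tfrac{1}{2}3^{k}-\bigl(\tfrac{25}{6}k^{2}-\tfrac{3}{2}k+1\bigr)$ for $n=3k+1$), and reads off the per-class thresholds $k\geq5$, $k\geq6$, $k\geq6$ whose union over the residue classes is exactly $n\geq16$. If anything, your sketch is more careful about the ``exponential beats quadratic'' step (monotonicity of the gap plus a finite boundary check), which the paper dismisses in a single sentence.
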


\begin{proof}
Let $T=TB_{n,\left\lceil \frac{n-1}{3}\right\rceil }$ and let $k=\left\lceil
\frac{n-1}{3}\right\rceil $ be the degree of central vertex in $T.$ From the
value of $k$ follows that neighbors of central vertex have degrees $3$ or $3$
and $2.$ Let $k_{2}$ be number of neighbors of central vertex with degree $2$
and let $k_{3}$ be number of neighbors of central vertex with degree $3.$
Obviously $k_{2}\leq2$ and $k_{2}+k_{3}=k.$ We have
\[
n=1+k_{2}+k_{3}+k_{2}+2k_{3}=1+3k-k_{2}.
\]
We distinguish three cases. Given the exact formula from Proposition
\ref{prop_values} we have
\[
\xi^{ac}(T)-\xi^{ac}(S_{n})=\left\{
\begin{tabular}
[c]{ll}%
$\frac{1}{2}3^{k}-\left(  \frac{25}{6}k^{2}-\frac{3}{2}k+1\right)  $ & for
$n=3k+1,$\\
$3^{k-1}-\left(  \frac{25}{6}k^{2}-\frac{9}{2}k+\frac{5}{2}\right)  $ & for
$n=3k,$\\
$\frac{2}{9}3^{k}-\left(  \frac{25}{6}k^{2}-\frac{15}{2}k+\frac{7}{2}\right)
$ & for $n=3k-1.$%
\end{tabular}
\right.
\]
Since exponential function grows faster then polynomial, from analyzing
obtained expressions we conclude that difference $\xi^{ac}(T)-\xi^{ac}(S_{n})$
is strictly positive:

\begin{itemize}
\item for $k\geq5$ $(n\geq16)$ in the case of $n=3k+1$,

\item for $k\geq6$ $(n\geq18)$ in the case od $n=3k$,

\item for $k\geq6$ $(n\geq17)$ in the case of $n=3k-1$.
\end{itemize}

\noindent Otherwise, the difference is strictly negative. That concludes the proof.
\end{proof}

\bigskip

Now, we can summarize our results in the following theorem.

\begin{theorem}
\label{tm_maxTrees}Let $T$ be a tree on $n$ vertices. Then
\[
\xi^{ac}(T)\leq\left\{
\begin{tabular}
[c]{ll}%
$\xi^{ac}(S_{n})$ & if $n\leq15,$\\
$\xi^{ac}(TB_{n,\left\lceil \frac{n-1}{3}\right\rceil })$ & if $n\geq16,$%
\end{tabular}
\right.
\]
with equality holding id and only if $T=S_{n}$ for $n\leq15$ and $T=\xi
^{ac}(TB_{n,\left\lceil \frac{n-1}{3}\right\rceil })$ for $n\geq16.$
\end{theorem}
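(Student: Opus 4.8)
The plan is to assemble Theorem~\ref{tm_maxTrees} as a direct synthesis of the reduction lemmas already proved, so the proof is essentially a bookkeeping argument that tracks how an arbitrary tree is transformed into one of the two candidate extremal trees. First I would observe that any tree $T$ with diameter $D\geq 5$ can be turned, by repeated application of the transformation in Theorem~\ref{tm_maxStar}, into a tree of diameter $4$ with strictly larger augmented ECI; hence no tree of diameter $\geq 5$ is extremal, and it suffices to consider $D\leq 4$. For $D\leq 4$, Lemma~\ref{lemma_caterpilar} disposes of all trees whose central vertex has at most two non-leaf neighbors (they are all strictly beaten by $S_n$), so the remaining candidates are $S_n$ itself and trees of diameter $4$ whose central vertex has at least three non-leaf neighbors.

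Next I would run the chain of reductions on that last family. Lemma~\ref{lemma_balance} shows that, fixing the degree $m$ of the central vertex, the degree balanced tree $TB_{n,m}$ dominates all others, so only degree balanced trees survive. Lemma~\ref{lemma_deg2or3} then forces the parameter $p=\lceil (n-1)/k\rceil$ down to $p\leq 3$; Lemma~\ref{lemma_max(p=3)} shows that among the $p=3$ trees the maximum is $TB_{n,\lceil (n-1)/3\rceil}$; and Lemma~\ref{lemma_max(p=2)} shows every $p=2$ tree is strictly beaten by $S_n$ or by $TB_{n,\lceil (n-1)/3\rceil}$. Combining these, the global maximizer is either $S_n$ or $TB_{n,\lceil (n-1)/3\rceil}$, and the preceding lemma (comparing the explicit formulas from Proposition~\ref{prop_values}) tells us which one: $S_n$ when $n\leq 15$ and $TB_{n,\lceil (n-1)/3\rceil}$ when $n\geq 16$.

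The main point requiring care — more a matter of careful statement than of a hard new idea — is the uniqueness of the extremal tree. Each transformation in Theorems~\ref{tm_maxStar} and \ref{tm_path} and in Lemmas~\ref{lemma_balance}, \ref{lemma_deg2or3}, \ref{lemma_max(p=3)}, \ref{lemma_max(p=2)} produces a \emph{strict} increase in $\xi^{ac}$, so any tree that is not already equal to the relevant candidate admits an applicable transformation and therefore cannot be extremal; I would spell this out by noting that if $T$ is extremal then no transformation applies, which for $D\leq 4$ and central degree $\geq 3$ means $T$ is degree balanced with $p\leq 3$ and, when $p=3$, has $k=\lceil(n-1)/3\rceil$, while the $p=2$ and small-central-degree cases are excluded outright. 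The one subtlety to flag is small $n$: for $n\leq 7$ the diameter-$4$, central-degree-$\geq 3$ family may be empty or contain only $S_n$-adjacent trees, but Lemma~\ref{lemma_caterpilar} and the explicit computations already cover these, so the statement holds vacuously or directly in those ranges.

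\begin{proof}
Let $T$ be a tree on $n$ vertices of maximum $\xi^{ac}$. If $T$ has diameter $D\geq 5$, then by the remark following Theorem~\ref{tm_maxStar} there is a diametric path meeting the hypotheses of that theorem, and the associated transformation yields a tree with strictly larger $\xi^{ac}$, contradicting maximality. Hence $D\leq 4$. If the central vertices of $T$ have at most two non-leaf neighbors and $T\neq S_n$, then Lemma~\ref{lemma_caterpilar} gives $\xi^{ac}(T)<\xi^{ac}(S_n)$, again contradicting maximality unless $T=S_n$. So either $T=S_n$, or $D=4$ and the (unique) central vertex $u$ has at least three non-leaf neighbors.

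Assume the latter. If $T$ is not degree balanced, Lemma~\ref{lemma_balance} applies and strictly increases $\xi^{ac}$, a contradiction; so $T=TB_{n,k}$ for some $k$. Write $p=\lceil (n-1)/k\rceil$. If $p\geq 4$, then $\deg(v_k)\geq 4$ and $\deg(v_k)-\deg(v_1)\leq 1$ (balance), so Lemma~\ref{lemma_deg2or3} applies and strictly increases $\xi^{ac}$, a contradiction; hence $p\leq 3$. If $p=2$, Lemma~\ref{lemma_max(p=2)} gives $\xi^{ac}(T)<\xi^{ac}(S_n)$ or $\xi^{ac}(T)<\xi^{ac}(TB_{n,\lceil (n-1)/3\rceil})$, so $T$ is not extremal. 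If $p=3$ and $k\neq \lceil (n-1)/3\rceil$, Lemma~\ref{lemma_max(p=3)} gives $\xi^{ac}(T)<\xi^{ac}(TB_{n,\lceil (n-1)/3\rceil})$, so again $T$ is not extremal. Therefore the only possible maximizers are $S_n$ and $TB_{n,\lceil (n-1)/3\rceil}$.

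It remains to compare these two. By the preceding lemma, $\xi^{ac}(TB_{n,\lceil (n-1)/3\rceil})>\xi^{ac}(S_n)$ if and only if $n\geq 16$. Hence the maximum of $\xi^{ac}$ over all trees on $n$ vertices equals $\xi^{ac}(S_n)$ for $n\leq 15$ and $\xi^{ac}(TB_{n,\lceil (n-1)/3\rceil})$ for $n\geq 16$. For the uniqueness of the extremal tree, note that every transformation invoked above produces a strict increase in $\xi^{ac}$; thus if $T$ attains the maximum, none of those transformations applies to it, which by the argument above forces $T=S_n$ when $n\leq 15$ and $T=TB_{n,\lceil (n-1)/3\rceil}$ when $n\geq 16$ (for $n\leq 15$ the tree $TB_{n,\lceil (n-1)/3\rceil}$, when it exists, has strictly smaller index by the last lemma, and for $n\geq 16$ the star has strictly smaller index).
\end{proof}
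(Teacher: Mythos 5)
Your proposal is correct and follows exactly the route the paper intends: Theorem \ref{tm_maxStar} reduces to $D\leq4$, Lemma \ref{lemma_caterpilar} handles central vertices with at most two non-leaf neighbors, Lemmas \ref{lemma_balance}, \ref{lemma_deg2or3}, \ref{lemma_max(p=3)} and \ref{lemma_max(p=2)} whittle the candidates down to $S_n$ and $TB_{n,\left\lceil\frac{n-1}{3}\right\rceil}$, and the final comparison lemma decides between them. The paper states the theorem as a summary without writing out this synthesis, so your write-up simply makes explicit (and correctly) the argument the paper leaves implicit.
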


\section{Extremal trees with perfect matching}

In this section we assume $n$ to be even since only trees with even $n$ can
have perfect matching. Also, tree with a perfect matching can obviously have
at most one pendant vertex on every vertex in it. Furthermore, if
$P=v_{0}v_{1}\ldots v_{D}$ is diametric path in a tree with a perfect matching
then $v_{1}$ and $v_{D-1}$ must be of degree $2$ since they already have one
pendent vertex and can't have more.

Path $P_{n}$ on $n$ vertices obviously has perfect matching, therefore the
problem of finding a tree with perfect matching and minimum $\xi^{ac}$ is
trivial - it is $P_{n}.$ But star $S_{n}$ and degree balanced tree
$TB_{n,\left\lceil \frac{n-1}{3}\right\rceil }$ do not have a perfect
matching. Therefore finding a tree with perfect matching and maximum $\xi
^{ac}$ is a nontrivial one. In order to find such tree, we will introduce
transformation of a tree which preserves existence of perfect matching and
increases $\xi^{ac}$. But before that, note that for every integer $m\geq2$
the following inequalities hold
\begin{align}
m  & \leq2^{m-1},\label{for_matching1}\\
2^{m-2}-m+1  & \geq0.\label{for_matching2}%
\end{align}
Also, for every integer $m\geq4$ holds
\begin{equation}
m\leq2^{m-2}.\label{for_matching3}%
\end{equation}
Now, we can state the following theorem.

\begin{theorem}
\label{tm_match}Let $T$ be a tree on $n$ vertices with a perfect matching and
$D\geq5$. Let $P=v_{0}v_{1}\ldots v_{D}$ be a diametric path in $T$ and let
$w_{1},\ldots,w_{k}$ be all vertices from $V\backslash\left\{  v_{3}\right\}
$ adjacent to $v_{2}$ and of degree $2.$ Let $T^{\prime}$ be a tree obtained
from $T$ by deleting edges $v_{2}w_{1},\ldots,v_{2}w_{k}$ and adding edges
$v_{3}w_{1},\ldots,v_{3}w_{k}.$ Then $T^{\prime}$ is a tree on $n$ vertices
which has perfect matching and
\[
\xi^{ac}(T)<\xi^{ac}(T^{\prime}).
\]

\end{theorem}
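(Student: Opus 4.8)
The plan is to show that the transformation moves some degree-$2$ vertices from being neighbors of $v_2$ to being neighbors of $v_3$, and to track exactly which vertices change their contribution to $\xi^{ac}$. First I would observe the structural facts forced by the hypotheses: since $T$ has a perfect matching and $P$ is diametric, $v_1$ and $v_{D-1}$ have degree $2$, and $v_1$ carries a pendant-matched vertex; also $v_2$ cannot be a leaf's only neighbor, so each $w_j$ is a degree-$2$ vertex whose other neighbor is a pendant vertex (so the subtree hanging at each $w_j$ is a path of length $2$, and these get re-attached intact to $v_3$, preserving the matching in $T'$). I would also note that no eccentricity increases under the transformation: $D\ge 5$ guarantees $v_3$ is "deep enough" that pushing these short pendant paths from $v_2$ onto $v_3$ does not create a longer diametric path, so $\varepsilon'(v)\le\varepsilon(v)$ for all $v$, with the relevant eccentricities of $v_2,v_3$ and the $w_j$'s being (at most) $D-2$, $D-3$ or similar concrete values bounded by $D$.

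Next I would identify precisely the vertices whose $M(\cdot)$ value changes. The degree of $v_2$ drops by $k$ and the degree of $v_3$ rises by $k$; every other vertex keeps its degree. Hence $M(v)$ changes only for neighbors of $v_2$ and neighbors of $v_3$: namely $v_1$, $v_3$ itself (its degree changed but $M(v_3)$ depends on $v_2$'s degree and the $w_j$'s leaving — careful bookkeeping needed), the $w_j$'s (whose neighbor $v_2$ is replaced by $v_3$), and $v_4$ and the old neighbors of $v_3$. Writing $m_i=\deg(v_i)$ and letting $c$ be $M(v_2)$ divided by the product of the degrees of $v_1$ and the $w_j$'s (all of degree $2$), so $M(v_2)=c\cdot m_1\cdot 2^{k}$ with $c\ge 1$, I would set up the difference $\xi^{ac}(T)-\xi^{ac}(T')$ as a sum of a handful of explicit terms. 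The decrease comes from $v_1$ (its neighbor $v_2$ lost degree) and from $v_0$; the increase comes from $v_3$ gaining degree and from $v_4$ and old neighbors of $v_3$. I expect the inequalities \eqref{for_matching1}, \eqref{for_matching2}, \eqref{for_matching3} to be exactly what is needed to dominate the "bad" terms: the factor $2^{k}$ appearing in $M(v_3)$ after the transformation versus linear-in-$k$ losses at $v_1$ and the $w_j$'s.

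The main obstacle, I expect, is the careful case handling of $\varepsilon(v_3)$ and $\varepsilon(v_4)$ and the precise value of $M(v_3)$ before and after — because $v_3$ may have other branches, and because when $D=5$ versus $D\ge 6$ the eccentricities of the affected vertices differ. I would handle this by bounding: replace each "decrease" eccentricity (denominator of a positive term we subtract) by its smallest possible value and each "increase" eccentricity by its largest, turning the exact identity into a clean lower bound, and then verify that lower bound is positive using $k\ge 1$, $m_i\ge 2$, and the three displayed integer inequalities. A secondary check is that $T'$ really has a perfect matching: since each moved pendant path of length $2$ is self-matched and the rest of $T$ is untouched as a matched forest, the matching of $T$ restricts to a perfect matching of $T'$; this is immediate once the structure of the $w_j$-subtrees is pinned down. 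Finally, as in Theorem \ref{tm_match}'s companion results, the strict inequality is secured by the strict decrease at $v_0$ (whose $M$-value strictly drops or whose eccentricity situation forces strictness) even if the other terms only give $\ge 0$.
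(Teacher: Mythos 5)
There is a genuine gap: your accounting of which vertices gain and which lose is wrong in a way that breaks the whole balance of the argument. First, $v_1$ is not a bystander whose $M$-value drops because $\deg(v_2)$ drops --- $v_1$ has degree $2$, is adjacent to $v_2$, and is not $v_3$, so $v_1$ is itself one of the $w_j$'s and gets re-attached to $v_3$ (the paper points this out immediately; it is why $k\geq1$ and why the diameter falls by one). Consequently $M(v_1)$ goes from $m_2$ to $\deg'(v_3)=m_3+k\geq m_2$, i.e.\ it does not decrease, and your formula $M(v_2)=c\cdot m_1\cdot 2^k$ double-counts $v_1$. Second, the dominant decreasing term is one you never list: $v_2$ itself. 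You assert that only neighbors of $v_2$ and $v_3$ change their $M$-value because only those two degrees change, but the transformation changes the \emph{neighbor sets} of $v_2$ and $v_3$: $M(v_2)$ loses the factor $2^k$ (dropping from roughly $m_3\cdot 2^k$ to $m_3$ or $m_3\cdot 1$), and this huge loss is the crux of the theorem. It must be offset against the gain at $v_3$, whose $M$-value is multiplied by $(m_2-k)2^k/m_2\geq1$ and whose eccentricity is strictly smaller than that of $v_2$; this comparison (together with $m_4\geq2$) is exactly where inequalities (\ref{for_matching1})--(\ref{for_matching3}) enter in the paper. Your plan pits $2^k$ in $M(v_3)$ against ``linear-in-$k$ losses at $v_1$ and the $w_j$'s,'' which is not the fight that actually has to be won.

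Third, the claimed source of strictness is also off: $M(v_0)=\deg(v_1)=2$ is unchanged, so $v_0$'s $M$-value does not ``strictly drop''; the only way the $v_0$ term helps is through $\varepsilon'(v_0)=D-1<D$, and that happens precisely because $v_1$ is moved --- the structural fact you overlooked. Finally, the paper's proof is organized as a three-way case analysis on whether $v_2$ and $v_3$ carry pendant (matched) vertices, which determines whether $k=m_2-1$ or $k=m_2-2$ and which additional terms (the pendant $u_2$ paired against $u_3$, or the $w_j$'s themselves) must be controlled; your proposal does not anticipate this case split, and without it the bookkeeping you describe cannot be carried out. The parts you do get right --- each $w_j$'s other neighbor is a leaf matched to it, so the matching survives; no eccentricity increases --- are the easy parts of the theorem.
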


\begin{proof}
First note that $k\geq1$ since at least $v_{1}$ is included among
$w_{1},\ldots,w_{k}.$ Let $m_{i}$ denote a degree of $v_{i}\in P.$ Note that
$m_{2}-2\leq k\leq m_{2}-1$ since all neighbors of $v_{2}$ are of degree $2$
except possibly $v_{3}$ (which is not counted in $k$ by construction) and one
pendant vertex. Now, eccentricities of all vertices do not increase by this
transformation. The only vertices whose $M(v)$ possibly decreases are $v_{2},$
$v_{3}$ and a pendent vertex of $v_{2}$ (if such exists). We distinguish three cases.

Case 1. $v_{2}$ has one pendent vertex and $v_{3}$ has one pendent vertex.

Let us denote with $u_{2}$ and $u_{3}$ pendant vertices of $v_{2}$ and $v_{3}
$ respectively. In this case $k=m_{2}-2.$ We have
\begin{align*}
\Delta_{1}  & =\frac{M(v_{2})}{\varepsilon(v_{2})}-\frac{M^{\prime}(v_{2}%
)}{\varepsilon^{\prime}(v_{2})}+\frac{M(v_{3})}{\varepsilon(v_{3})}%
-\frac{M^{\prime}(v_{3})}{\varepsilon^{\prime}(v_{3})}\leq\\
& \leq\frac{2^{m_{2}-2}\cdot m_{3}}{\varepsilon^{\prime}(v_{2})}-\frac
{m_{3}+m_{2}-2}{\varepsilon^{\prime}(v_{2})}+\frac{m_{2}\cdot2^{m_{3}-3}\cdot
m_{4}}{\varepsilon^{\prime}(v_{3})}-\frac{2\cdot2^{m_{3}-3}\cdot2^{m_{2}%
-2}\cdot m_{4}}{\varepsilon^{\prime}(v_{3})}%
\end{align*}
Since $\varepsilon^{\prime}(v_{2})>\varepsilon^{\prime}(v_{3})$ and $m_{4}%
\geq2,$ in order to prove that $\Delta_{1}>0$ it is sufficient to prove that
\[
m_{3}\cdot(2^{m_{2}-2}-1)-m_{2}+2\leq2^{m_{3}-3}\cdot2\cdot\left(
2\cdot2^{m_{2}-2}-m_{2}\right)  .
\]
If $m_{3}=3$, then this inequality becomes $2^{m_{2}-2}-m_{2}+1\geq0$ which is
actually inequality (\ref{for_matching2}) for $m_{2}\geq2$ and therefore
holds. If $m_{3}\geq4,$ then since $m_{2}-2\geq0,$ it is sufficient to prove
\[
m_{3}\cdot(2^{m_{2}-2}-1)\leq2^{m_{3}-3}\cdot2\cdot\left(  2\cdot2^{m_{2}%
-2}-m_{2}\right)
\]
which follows from (\ref{for_matching3}) for $m_{3}\geq4$ and
(\ref{for_matching2}) for $m_{2}\geq2.$ Also, from $\varepsilon^{\prime}%
(u_{2})>\varepsilon^{\prime}(u_{3})$ follows that
\[
\Delta_{2}=\frac{M(u_{2})}{\varepsilon(u_{2})}-\frac{M^{\prime}(u_{2}%
)}{\varepsilon^{\prime}(u_{2})}+\frac{M(u_{3})}{\varepsilon(u_{3})}%
-\frac{M^{\prime}(u_{3})}{\varepsilon^{\prime}(u_{3})}\leq\frac{m_{2}%
}{\varepsilon^{\prime}(u_{2})}-\frac{2}{\varepsilon^{\prime}(u_{2})}%
+\frac{m_{3}}{\varepsilon^{\prime}(u_{3})}-\frac{m_{3}+m_{2}-2}{\varepsilon
^{\prime}(u_{3})}<0.
\]
Now
\[
\xi^{ac}(T)-\xi^{ac}(T^{\prime})\leq\Delta_{1}+\Delta_{2}<0.
\]

Case 2. $v_{2}$ has one pendent vertex and $v_{3}$ has no pendant vertices.

Let us denote with $u_{2}$ pendant vertex of $v_{2}$. In this case
$k=m_{2}-2.$ We have
\begin{align*}
\xi^{ac}(T)-\xi^{ac}(T^{\prime})  & \leq\frac{M(v_{2})}{\varepsilon(v_{2}%
)}-\frac{M^{\prime}(v_{2})}{\varepsilon^{\prime}(v_{2})}+\frac{M(v_{3}%
)}{\varepsilon(v_{3})}-\frac{M^{\prime}(v_{3})}{\varepsilon^{\prime}(v_{3}%
)}+\frac{M(u_{2})}{\varepsilon(u_{2})}-\frac{M^{\prime}(u_{2})}{\varepsilon
^{\prime}(u_{2})}\leq\\
& \leq\frac{m_{3}\cdot\left(  2^{m_{2}-2}-1\right)  }{\varepsilon^{\prime
}(v_{2})}-\frac{m_{2}-2}{\varepsilon^{\prime}(v_{2})}-\frac{2^{m_{3}-2}\cdot
m_{4}\cdot\left(  2\cdot2^{m_{2}-2}-m_{2}\right)  }{\varepsilon^{\prime}%
(v_{3})}+\frac{m_{2}-2}{\varepsilon^{\prime}(u_{2})}%
\end{align*}
Since $\varepsilon^{\prime}(u_{2})>\varepsilon^{\prime}(v_{2})>\varepsilon
^{\prime}(v_{3})$ and $m_{4}\geq2,$ it is sufficient to prove that
\[
m_{3}\cdot\left(  2^{m_{2}-2}-1\right)  \leq2^{m_{3}-2}\cdot2\cdot\left(
2\cdot2^{m_{2}-2}-m_{2}\right)  .
\]
But this follows from inequality (\ref{for_matching1}) for $m_{3}\geq2$ and
(\ref{for_matching2}) for $m_{2}\geq2.$

Case 3. $v_{2}$ has no pendent vertices.

Note that in this case edge $v_{2}v_{3}$ must be included in a perfect
matching, therefore $v_{3}$ cannot have pendent vertices. In this case
$k=m_{2}-1.$ Note that $\varepsilon^{\prime}(w_{i})=\varepsilon^{\prime}%
(v_{2}).$ We have
\begin{align*}
\xi^{ac}(T)-\xi^{ac}(T^{\prime})  & \leq\frac{M(v_{2})}{\varepsilon(v_{2}%
)}-\frac{M^{\prime}(v_{2})}{\varepsilon^{\prime}(v_{2})}+\frac{M(v_{3}%
)}{\varepsilon(v_{3})}-\frac{M^{\prime}(v_{3})}{\varepsilon^{\prime}(v_{3}%
)}+\sum_{i=1}^{k}\left(  \frac{M(w_{i})}{\varepsilon(w_{i})}-\frac{M^{\prime
}(w_{i})}{\varepsilon^{\prime}(w_{i})}\right)  \leq\\
& \leq\frac{m_{3}\cdot\left(  2^{m_{2}-1}-1\right)  -m_{2}+1}{\varepsilon
^{\prime}(v_{2})}-\frac{2^{m_{3}-2}\cdot m_{4}\cdot\left(  2\cdot2^{m_{2}%
-1}-m_{2}\right)  }{\varepsilon^{\prime}(v_{3})}+\\
& +\frac{\left(  m_{2}-1\right)  \left(  1-m_{3}\right)  }{\varepsilon
^{\prime}(v_{2})}%
\end{align*}
Since $\varepsilon^{\prime}(v_{2})>\varepsilon^{\prime}(v_{3})$ and $m_{4}%
\geq2$ it is sufficient to prove that
\[
m_{3}\cdot\left(  2^{m_{2}-1}-1\right)  -m_{2}+1+\left(  m_{2}-1\right)
\left(  1-m_{3}\right)  \leq2^{m_{3}-2}\cdot2\cdot\left(  2\cdot2^{m_{2}%
-1}-m_{2}\right)  ,
\]
which is equivalent to
\[
m_{3}\cdot\left(  2^{m_{2}-1}-m_{2}\right)  \leq2^{m_{3}-1}\left(
2\cdot2^{m_{2}-1}-m_{2}\right)
\]
and follows from (\ref{for_matching1}) for $m_{3}\geq2.$
\end{proof}

\bigskip

Now, as a corollary to this theorem we obtain the only extremal tree with
respect to augmented ECI among trees with perfect matching.

\begin{corollary}
\label{cor_maxMatch}Let $T\not =TB_{n,\frac{n}{2}}$ be a tree on $n$ vertices
with perfect matching. Then
\[
\xi^{ac}(T)<\xi^{ac}(TB_{n,\frac{n}{2}}).
\]

\end{corollary}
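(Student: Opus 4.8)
The plan is to run the reduction machinery already developed in this section and in Section 3. Start with an arbitrary tree $T \neq TB_{n,\frac{n}{2}}$ on $n$ vertices (with $n$ even) having a perfect matching. The first step is to push the diameter down: if $D(T) \geq 5$, apply Theorem \ref{tm_match} repeatedly. Each application preserves the existence of a perfect matching, keeps the vertex count at $n$, and strictly increases $\xi^{ac}$; since the transformation in Theorem \ref{tm_match} moves pendant-bearing structure toward the center (it reduces $\deg(v_2)$ along a diametric path), iterating it must terminate at a tree $T_1$ with a perfect matching, with $\xi^{ac}(T_1) \geq \xi^{ac}(T)$ and $D(T_1) \leq 4$. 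Here I would invoke the standard observation already used for Theorem \ref{tm_maxStar} that the diameter cannot increase under these moves, so termination is genuine.

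Next I would dispose of the low-diameter cases. A tree with a perfect matching has at least one edge, and since the two near-ends $v_1, v_{D-1}$ of any diametric path must have degree $2$ (each already carries its matched pendant), the cases $D \leq 3$ are immediate: $D \leq 1$ forces $n = 2$ and $T = TB_{2,1} = P_2 = TB_{n,\frac{n}{2}}$; $D = 2$ is impossible for a tree with a perfect matching on $n \geq 4$ vertices (a star has no perfect matching for $n \geq 4$); $D = 3$ gives the double-star, which one checks directly has smaller $\xi^{ac}$ than $TB_{n,\frac{n}{2}}$, or — cleaner — a $D=3$ tree with a perfect matching is the path $P_4$ only when $n=4$, and $P_4 = TB_{4,2} = TB_{n,\frac{n}{2}}$, while for larger $n$ the double-star structure is handled by noting it reduces to the $D=4$ analysis. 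So the real work is the case $D(T_1) = 4$.

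For $D(T_1) = 4$, let $u$ be the (unique) central vertex. Because $T_1$ has a perfect matching, every neighbor of $u$ carries exactly one pendant edge, so in particular every non-leaf neighbor of $u$ has degree exactly $2$ — hence $u$ has no branching neighbors at all, and the only possible structure is: $u$ adjacent to some neighbors of degree $2$ (each with its own pendant leaf) and possibly some leaves directly. But a leaf adjacent to $u$ would have to be matched to $u$, and then no other neighbor of $u$ could be matched to $u$; counting shows a leaf directly at $u$ is incompatible with a perfect matching unless $n = 2$. Therefore for $n \geq 4$ every neighbor of $u$ has degree $2$, which means $\deg(u) = \frac{n}{2}$ and $T_1 = TB_{n,\frac{n}{2}}$ exactly. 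Combining, $\xi^{ac}(T) \leq \xi^{ac}(T_1) = \xi^{ac}(TB_{n,\frac{n}{2}})$, and the inequality in the first step is strict as soon as the diameter-reduction was ever invoked or as soon as $T \neq T_1$; since $T \neq TB_{n,\frac{n}{2}}$ forces at least one nontrivial transformation, the inequality is strict.

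The step I expect to be the main obstacle is the bookkeeping in the $D=4$ structural analysis — specifically, making airtight the claim that a perfect matching forces every neighbor of the central vertex to have degree exactly $2$ and forbids leaves hanging directly off $u$. This is where one must carefully use the matching constraint (at most one pendant per vertex, and the matched-edge parity around $u$) rather than just eccentricity bounds. Once that structural rigidity is established, the corollary is immediate and requires no further computation, since it identifies $T_1$ literally as $TB_{n,\frac{n}{2}}$ rather than merely bounding its index.
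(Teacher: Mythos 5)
Your overall route is the same as the paper's: reduce the diameter with Theorem \ref{tm_match} until $D\leq 4$, then argue that the only diameter-$4$ tree with a perfect matching is $TB_{n,\frac{n}{2}}$. The paper states this last identification without proof, and it is exactly there that your argument goes wrong. You claim that a leaf hanging directly off the central vertex $u$ is incompatible with a perfect matching (for $n\geq 4$), and conclude that every neighbor of $u$ has degree exactly $2$ with $\deg(u)=\frac{n}{2}$. Both halves of this are false. If every neighbor of $u$ had degree $2$, then $n=1+\deg(u)+\deg(u)=1+2\deg(u)$ would be odd (so $\deg(u)\neq\frac{n}{2}$), and moreover $u$ itself would be unmatched: each degree-$2$ neighbor of $u$ is forced to be matched to its own pendant leaf (that leaf has no other neighbor), so none of them can be matched to $u$. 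The tree you describe therefore has no perfect matching at all.

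The correct structural statement is the opposite of what you wrote: $u$ must have \emph{exactly one} leaf neighbor, namely its partner in the matching. (At most one, since two leaves at $u$ would both need to be matched to $u$; at least one, by the argument above that no degree-$2$ neighbor can serve as $u$'s partner. And no neighbor of $u$ can have degree $\geq 3$, since it would carry two pendant leaves both requiring it as partner.) So the unique diameter-$4$ tree with a perfect matching consists of $u$, one pendant leaf at $u$, and $\frac{n}{2}-1$ neighbors of degree $2$ each carrying one leaf; this gives $\deg(u)=\frac{n}{2}$ and neighbor degrees $1,2,\dots,2$, which is precisely $TB_{n,\frac{n}{2}}$ (note $p=\lceil\frac{n-1}{n/2}\rceil=2$, so degrees $1$ and $2$ are admissible in a degree-balanced tree). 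With that correction the identification $T_1=TB_{n,\frac{n}{2}}$ goes through and the rest of your argument (strictness because $T\neq TB_{n,\frac{n}{2}}$ forces at least one strict application of Theorem \ref{tm_match}, plus the trivial low-diameter cases) is fine and matches the paper's intent.
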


\begin{proof}
We apply the transformation from Theorem \ref{tm_match} on $T.$ Note that each
transformation decreases diameter by $1$ until finally we obtain the tree of
diameter $4.$ The only tree of diameter $4$ which has perfect matching is
$TB_{n,\frac{n}{2}}.$
\end{proof}

\section{Extremal graphs}

Let us now establish extremal graphs among all simple connected graphs. Those
results will follow easily from results for threes. First, the following
proposition obviously holds, since contribution of every vertex to $\xi^{ac}$
in complete graph $K_{n}$ is maximum possible.

\begin{proposition}
\label{prop_maxGraphs}For a graph $G\not =K_{n}$ on $n$ vertices holds
\[
\xi^{ac}(G)<\xi^{ac}(K_{n}).
\]

\end{proposition}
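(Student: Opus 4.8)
The plan is to argue directly from the definition of $\xi^{ac}$ by showing that each vertex of an arbitrary graph $G$ on $n$ vertices contributes at most as much to $\xi^{ac}(G)$ as the corresponding vertex contributes to $\xi^{ac}(K_n)$, and that for $G\neq K_n$ the inequality is strict for at least one vertex. First I would recall that in any graph on $n$ vertices, for every vertex $u$ we have $\varepsilon(u)\geq 1$, with $\varepsilon(u)=1$ precisely when $u$ is adjacent to all other vertices; and for every neighbor $w$ of $u$ we have $\deg(w)\leq n-1$, hence $M(u)=\prod_{w\sim u}\deg(w)\leq (n-1)^{\deg(u)}\leq(n-1)^{n-1}$. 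Combining these, $\frac{M(u)}{\varepsilon(u)}\leq (n-1)^{n-1}$ for every $u\in V(G)$, and summing over the $n$ vertices gives $\xi^{ac}(G)\leq n\cdot(n-1)^{n-1}=\xi^{ac}(K_n)$, the last equality being item 5 of Proposition \ref{prop_values}.

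Next I would establish strictness. Suppose $\xi^{ac}(G)=n\cdot(n-1)^{n-1}$. Then equality must hold in the bound for every vertex, so for each $u$ we need $\varepsilon(u)=1$ and $M(u)=(n-1)^{n-1}$. But $\varepsilon(u)=1$ for all $u$ already forces $G=K_n$ (every vertex adjacent to every other), so any $G\neq K_n$ has at least one vertex $u$ with $\varepsilon(u)\geq 2$ or with some neighbor of degree less than $n-1$, hence $\frac{M(u)}{\varepsilon(u)}<(n-1)^{n-1}$ for that vertex while all others still satisfy the non-strict bound; summing yields $\xi^{ac}(G)<\xi^{ac}(K_n)$. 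One small point to handle carefully is the degenerate case: if $M(u)=0$ cannot occur since $G$ is connected on $n\geq 2$ vertices, every vertex has a neighbor of degree $\geq 1$, so products are positive and the estimates are clean; and the case $n=1$ is vacuous or excluded.

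There is essentially no hard part here — this is exactly the remark preceding the proposition made precise ("contribution of every vertex to $\xi^{ac}$ in complete graph $K_n$ is maximum possible"). The only thing to be a little attentive about is making the vertex-wise comparison watertight: one should note that the two factors are bounded independently (eccentricity from below, $M(u)$ from above) so no vertex can exceed $(n-1)^{n-1}$, and that equality in all of them simultaneously is an extremely rigid condition forcing completeness. I would present the argument as a short two-paragraph proof: first the per-vertex bound and summation, then the equality analysis giving strictness for $G\neq K_n$.

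\begin{proof}
For any vertex $u$ in a connected graph $G$ on $n\geq 2$ vertices we have $\varepsilon(u)\geq 1$ and each neighbor of $u$ has degree at most $n-1$, so that $M(u)\leq(n-1)^{\deg(u)}\leq(n-1)^{n-1}$ and hence $\frac{M(u)}{\varepsilon(u)}\leq(n-1)^{n-1}$. Summing over all $n$ vertices and using Proposition \ref{prop_values} we get $\xi^{ac}(G)\leq n\cdot(n-1)^{n-1}=\xi^{ac}(K_n)$. If equality held, then $\varepsilon(u)=1$ would have to hold for every $u\in V(G)$, which means every vertex is adjacent to all others, i.e. $G=K_n$. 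Therefore for $G\not=K_n$ the inequality is strict.
\end{proof}
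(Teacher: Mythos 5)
Your proof is correct and follows exactly the route the paper intends: the paper gives no formal argument, only the remark that "the contribution of every vertex to $\xi^{ac}$ in the complete graph $K_n$ is maximum possible," and your per-vertex bound $\frac{M(u)}{\varepsilon(u)}\leq(n-1)^{n-1}$ together with the equality analysis is precisely that remark made rigorous.
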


Therefore, we have established only maximal graphs with respect to the value
of augmented ECI. In the following proposition we establish minimal graphs.

\begin{proposition}
\label{prop_minGraphs}For a graph $G\not =P_{n}$ on $n$ vertices holds
\[
\xi^{ac}(G)>\xi^{ac}(P_{n}).
\]

\end{proposition}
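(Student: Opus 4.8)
The plan is to reduce the statement about general graphs to the already-established result about trees, namely Corollary~\ref{Cor_path}. The key observation is that among all graphs on $n$ vertices, the path $P_n$ is the unique one whose vertices have the smallest possible degrees and whose eccentricities are as large as possible; intuitively, adding edges can only increase degrees (hence the products $M(u)$) and decrease eccentricities, both of which push $\xi^{ac}$ up. Since we already know $P_n$ is the strict minimizer among trees, it should remain the strict minimizer among all connected graphs.

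First I would take an arbitrary connected graph $G \neq P_n$ and choose a spanning tree $T$ of $G$. Then I would compare $\xi^{ac}(G)$ with $\xi^{ac}(T)$ vertex by vertex: for each $u \in V$, the degree of $u$ in $G$ is at least its degree in $T$, so every factor in the product $M_G(u) = \prod_{v \sim u} \deg_G(v)$ is at least the corresponding factor in $M_T(u)$ (and there are at least as many factors), giving $M_G(u) \geq M_T(u)$; meanwhile $\varepsilon_G(u) \leq \varepsilon_T(u)$ because distances in $G$ are no larger than in $T$. Hence $\xi^{ac}(G) \geq \xi^{ac}(T)$. Then, if $T \neq P_n$, Corollary~\ref{Cor_path} gives $\xi^{ac}(T) > \xi^{ac}(P_n)$ and we are done. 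If instead $T = P_n$, then $G$ must contain at least one edge not in $P_n$ (since $G \neq P_n$), and I would argue that this extra edge strictly decreases some eccentricity or strictly increases some $M(u)$, yielding the strict inequality $\xi^{ac}(G) > \xi^{ac}(P_n)$ directly.

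The main obstacle is the degenerate case $T = P_n$: here one must show that a single chord added to a path genuinely changes the value, which requires a small but careful argument. Adding any chord $v_iv_j$ with $j>i+1$ to the path $P_n = v_0v_1\ldots v_{n-1}$ increases the degree of both $v_i$ and $v_j$ from at most $2$ to at least $3$; consequently $M(v_{i-1})$, $M(v_{i+1})$, $M(v_{j-1})$, $M(v_{j+1})$ (those that exist) strictly increase, while no $M(u)$ and no $\varepsilon(u)$ decreases below the path values except possibly $\varepsilon$ decreasing further — which only helps. Since at least one such neighbour exists (as $n\geq 3$ whenever $P_n$ has a chord, and in fact $0\le i<j\le n-1$ with $j\ge i+2$ forces the existence of $v_{i+1}$), we get a strict increase in the sum. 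I would also note the trivial small cases ($n\leq 2$, where $P_n = K_n$ and no other graph exists) are vacuous.

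Thus the proof is short: pick a spanning tree, use monotonicity of $\xi^{ac}$ under edge addition together with Corollary~\ref{Cor_path}, and dispose of the $T=P_n$ case by inspecting the effect of one chord. I expect the write-up to be only a few lines, the one delicate point being the explicit verification that the presence of a chord forces a strict, not merely weak, inequality.
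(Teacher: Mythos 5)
Your proposal is correct and follows essentially the same route as the paper: pass to a spanning tree, observe that adding edges can only increase the products $M(u)$ and decrease eccentricities, and invoke Corollary~\ref{Cor_path}. The only cosmetic difference is that you isolate the strictness issue in the case $T=P_n$ via an explicit chord argument, whereas the paper gets strictness uniformly by noting that deleting any edge of a non-tree $G$ strictly decreases the degree of some vertex and hence strictly decreases $M(w)$ for one of its neighbours; both arguments are sound.
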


\begin{proof}
Let $T$ be spanning tree of $G.$ From definition of spanning tree follows that
$T$ is obtained from $G$ by deleting some edges. Note that deleting edges does
not decrease eccentricities of vertices. If $G$ is already a tree, then the
result follows from Corollary \ref{Cor_path}. If $G$ is not a tree, then we
have to delete at least one edge. Note that by deleting edges degrees of
vertices (and therefore values $M(v)$) do not increase. Since we deleted at
least one edge, that means that the degree of at least one vertex decreased
and we have
\[
\xi^{ac}(G)>\xi^{ac}(T)\geq\xi^{ac}(P_{n})
\]
which concludes the proof.
\end{proof}

\section{Conclusion}

In this paper we studied augmented eccentric connectivity index on graphs and
trees. We established that minimal trees with respect to augmented ECI are
paths $P_{n}$ (Corollary \ref{Cor_path}), while maximal trees are either stars
$S_{n}$ for $n\leq15$ either degree balanced trees $TB_{n,\left\lceil
\frac{n-1}{3}\right\rceil }$ for $n\geq16$ (Theorem \ref{tm_maxTrees}). Using
similar techniques we proved that in the class of trees with perfect matching
minimal trees are again paths $P_{n}$, while maximal trees are $TB_{n,\frac
{n}{2}}$ (Corollary \ref{cor_maxMatch}). In the class of general simple
connected graphs on $n$ vertices, maximal graphs with respect to augmented ECI
are complete graphs $K_{n}$ (Proposition \ref{prop_maxGraphs}), while minimal
graphs are paths $P_{n}$ (Proposition \ref{prop_minGraphs}). The explicit
formulas for the values of augmented ECI of all these graphs which turned out
to be extremal are derived and presented in Proposition \ref{prop_values}.

There are many open questions for further study. In this paper we only
initiated studying extremal trees with given parameter (trees with perfect
matching). One could try to establish extremal trees with given diameter,
radius, number of pendant vertices, maximum degree (chemical trees), etc.
Also, one could try to establish extremal unicyclic graphs with respect to
augmented ECI. Deriving exact formulas for the value of augmented ECI on some
special kinds of graphs would also be interesting, just as studying of how
augmented ECI behaves with respect to graph operations.

Finally, there is also super augmented ECI, which is similar to augmented ECI,
and is defined with
\[
\xi^{ac}(G)=\sum_{u\in V}\frac{M(u)}{\varepsilon^{2}(u)}.
\]
It would be interesting to derive all those results for that index too. As for
the results from this paper, we mostly relied on order of $\varepsilon(v)$
between pairs of vertices. Since the same order holds for $\varepsilon^{2}(v)$
then the results for $\xi^{sac}(G)$ should be perfectly analogous.

\section{Acknowledgements}

Partial support of the Ministry of Science, Education and Sport of the
Republic of Croatia (grant. no. 083-0831510-1511) and of project Gregas is
gratefully acknowledged.

\end{document}